\documentclass[12pt]{amsart}
\usepackage{multirow}
\usepackage{fancyvrb}

\setlength{\textwidth}{6in}   
\setlength{\textheight}{9in}

\oddsidemargin=0.0in \evensidemargin=0in \textwidth=6.5in

\topmargin=-0.5in \textheight=9.0in

\setlength{\evensidemargin}{0in} \setlength{\oddsidemargin}{0in} \setlength{\topmargin}{-.5in}

\usepackage{amssymb,graphicx,amsthm}
\usepackage{amsmath}
\usepackage{makecell}

\def\S{{\bf S}}
\def\A{{\bf A}}

\def\T{{\bf T}}
\def\L{{\bf L}}
\def\N{{\bf N}}
\def\J{{\bf J}}

\def\AI{\A_{\mathcal I}}
\def\AH{\A_{\mathcal H}}
\def\I{{\mathcal I}}
\def\H{{\mathcal H}}

\newtheorem{thm}{Theorem}[section]
\newtheorem{lemma}[thm]{Lemma}

\newtheorem{prop}[thm]{Proposition}

\begin{document}

\title{{\bf Computational enumeration of Andr\'e planes}}

\author{Jeremy M. Dover}
\address{1204 W. Yacht Dr., Oak Island, NC 28465 USA}
\email{doverjm@gmail.com}

\begin{abstract}
In this paper, we address computational questions surrounding the enumeration of non-isomorphic Andr\'e planes for any prime power order $q$. We are particularly focused on providing a complete enumeration of all such planes for relatively small orders (up to 125), as well as developing computationally efficient ways to count the number of isomorphism classes for other orders where enumeration is infeasible. Andr\'e planes of all dimensions over their kernel are considered.
\end{abstract}

\maketitle

\section{Introduction}
In their seminal paper, Bruck and Bose~\cite{bruckbose} proved that every finite translation plane can be obtained from a construction that starts with a {\em spread} of an odd-dimensional projective space. Letting that space be $PG(2n+1,q)$, a spread is a partition of the space into $q^{n+1}+1$ pairwise disjoint $n$-dimensional subspaces. Much of this work was anticipated in 1954, by Andr\'e~\cite{andre} who provided a similar construction of finite translation planes using a vector space model that was later shown to be equivalent to the projective space model developed by Bruck and Bose. 

Without getting too rigorous at this point, the basic concept behind Andr\'e's construction starts with a regular spread $\S$ of $PG(2n+1,q)$. Bruck~\cite{bruck:cg2} has shown that for any pair of spaces in $\S$, it is possible to coordinatize $PG(2n+1,q)$ in a manner such that the elements of $\S$ are in one-to-one correspondence with the field $GF(q^{n+1})$, together with a special element $\infty$. In this model, the two starting spaces are $0$ and $\infty$, and the remaining spaces can be partitioned into $q-1$ norm surfaces, where each norm surface consists of a set of spaces that correspond to elements of $GF(q^{n+1})$ with constant norm over $GF(q)$. The key observation is that each of these norm surfaces admits $n$ {\em replacements}, that is sets of $n$-spaces that do not lie in $\S$ but cover exactly the same point set as the spaces in the norm surface of $\S$. By deleting one or more norm surfaces of $\S$ and adding in a replacement for each, one obtains a spread called an Andr\'e spread. The coordinatization of $\S$ in this manner is important, and creates what is called a {\em linear set} of norm surfaces. There may exist sets of pairwise disjoint norm surfaces in a regular spread that do not all lie in a linear set; replacement of these norm surfaces creates a so-called subregular spread, but we do not address those spreads here.

As described by Johnson, et al~\cite{jjb}, the Andr\'e planes are all ``known", in the sense that for any order $q$ it is straightforward to construct all Andr\'e planes; there are no new Andr\'e planes to discover. But with the revolution in computing since the Andr\'e planes were discovered, it is now possible to construct and database models for all known translation planes of order up to and including 128, and the feasibility of order 169 is probably not far off. While it is theoretically possible to construct all $2^{12}$ Andr\'e spreads from a regular spread of $PG(3,13)$, the computational power required to determine isomorphism between more than 4000 projective planes of order 169 is certainly formidable. Our goal in this paper is to develop algorithms using group actions to determine representatives for all isomorphism classes of Andr\'e planes, as well as more efficient algorithms that count isomorphism classes non-constructively with Burnside's lemma.

\section{Some preliminary results about regular spreads}

In this section, we provide a framework for addressing the Andr\'e planes obtained from $PG(2n+1,q)$. Many of the results here are well known for the case $n=1$, and for the higher-dimensional cases many of these results were certainly known to Bruck~\cite{bruck:cg2}, if not explicitly stated. However, there is value in clearly articulating the results here in a unified manner, so that we may refer to them as needed in what follows.

Let $n \ge 1$ be an integer and $q>2$ a prime power. Define $F = GF(q)$, $K = GF(q^{n+1})$, and let $F^*$ denote the set of non-zero elements of $F$, with $K^*$ defined analogously. Let $V = K \oplus K$ be a $(2n+2)$-dimensional vector space over $F$.  This vector space $V$ is a model for $PG(2n+1,q)$ using homogeneous coordinates, such that $(x,y)$ and $(fx,fy)$ define the same point for all $f \in F^*$.

A {\em spread} of $PG(2n+1,q)$ is a set of $q^{n+1}+1$ pairwise disjoint $n$-dimensional subspaces which partition the points of $PG(2n+1,q)$. A {\em regulus} $R$ in $PG(2n+1,q)$ is a set of $q+1$ pairwise disjoint $n$-dimensional subspaces such that any line that meets three spaces in $R$ meets all of them. Bruck and Bose~\cite{bruckbose} show that any three pairwise disjoint $n$-dimensional subspaces lie in a unique regulus, and define a {\em regular spread} to be one that contains the regulus generated by any three of its elements. Regular spreads are significant in that for any $q>2$, the Bruck/Bose construction of translation planes yields a Desarguesian plane if and only if the spread used to construct the plane is regular. From this definition, it is easy to see that any two regular spreads that meet in at least three subspaces must meet in at least an entire regulus; Bruck and Bose show that a regulus and a single space disjoint from all elements of the regulus uniquely determine a regular spread, which shows that $q+1$ spaces is the largest possible intersection between two distinct regular spreads.

Bruck and Bose give a specific coordinate model for a regular spread which will be very useful in what follows. Define $J(\infty) = \{(x,0):x \in K\}$, and for any $k \in K$, let $J(k) = \{(kx,x):x \in K\}$. These are all $n$-dimensional subspaces, and are pairwise disjoint, hence we have a spread $\S = \{J(\infty)\} \cup \{J(k):k \in K\}$; Bruck and Bose prove that this spread is regular.

Let $Aut(K/F)$ denote the group of field automorphisms of $K$ with fixed field $F$, which necessarily has order $n+1$, and let $\sigma$ be any element of $Aut(K/F)$. It is clear by linearity of $\sigma$ that the sets $J^\sigma(k) = \{(kx^\sigma,x):x \in K\}$ are also $n$-dimensional subspaces of $PG(2n+1,q)$. Andr\'e~\cite{andre} noted that for any $f \in F^*$, the set of subspaces $\J_f = \{J(k): \N(k) = f\}$, where $\N$ is the norm function from $K$ to $F$, can be replaced by the set $\J^\sigma_f = \{J^\sigma(k):\N(k) = f\}$, for any $\sigma$ to create a new spread of $PG(2n+1,q)$ that is not regular. Moreover, each $\J_f$ can be replaced (or not) independently as $f$ varies over $F^*$, yielding $(n+1)^{q-1}$ different, though potentially isomorphic, spreads. These spreads are called {\em Andr\'e spreads}, and the translation planes that arise from them are called {\em Andr\'e planes}. The sets $\J_f$, and any set of spaces in $\S$ isomorphic to them, are called {\em norm surfaces} in $\S$.

An important notion when dealing with Andr\'e spreads is the concept of a {\em linear set} of norm surfaces. Bruck has developed an abstract definition of linearity, but ultimately a set $T$ of norm surfaces in $\S$ is linear if there exists a collineation of $PG(2n+1,q)$ which leaves $\S$ invariant and maps $T$ onto a set of norm surfaces $\J_f$. We will shortly recall some of Bruck's results about linearity which require the abstract definition to prove, but we only need the cited results in what follows.

For the $n=1$ case, Bruck~\cite{bruck} extensively developed an isomorphism between the regular spread $\S$ of $PG(3,q)$ and the Miquelian inversive plane $M(q)$. For the higher-dimensional cases, Bruck~\cite{bruck:cg2} generalized this connection to higher-dimensional circle geometries, keeping intact the isomorphism between a regular spread of $PG(2n+1,q)$ and $CG(n,q)$. The key takeaway from the circle geometry connection is that it shows an isomorphism between the regular spread $\S$ of $PG(2n+1,q)$ and the projective line $PG(1,q^{n+1})$, coordinatized as $K \cup \{\infty\}$, where the spaces in $\S$ map to the points of $PG(1,q^{n+1})$, and the reguli in $\S$ map to order $q$ sublines in $PG(1,q^{n+1})$. Moreover, there is a homomorphism $\Psi$ from the group of collineations of $PG(2n+1,q)$ leaving $\S$ invariant to $P\Gamma L(2,q^{n+1})$; this allows us to provide an explicit description of the group of collineations of $PG(2n+1,q)$ leaving $\S$ invariant. This result is fairly well-known, but stating it explicitly allows us to set up some notation.

\begin{prop}
\label{Sgroup}
Let $\S = \{J(\infty)\} \cup \{J(k):k \in K\}$ be a regular spread in $PG(2n+1,q)$. Then the group of collineations of $PG(2n+1,q)$ that leaves $\S$ invariant consists of the transformations $\left\{ \chi_{a,b,c,d,\rho}: \rho \in Aut(K), a,b,c,d \in K, ad-bc \ne 0\right\}$ acting via $$(x,y) \chi_{a,b,c,d,\rho} = (ax^\rho+cy^\rho,bx^\rho+dy^\rho)$$
\end{prop}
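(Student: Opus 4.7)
The plan is to verify the characterization in two directions: first, that every $\chi_{a,b,c,d,\rho}$ is a collineation of $PG(2n+1,q)$ leaving $\S$ invariant; second, that every such collineation arises in this way. I will leverage the homomorphism $\Psi \colon G \to P\Gamma L(2, q^{n+1})$ mentioned just before the statement, where $G$ denotes the full stabilizer of $\S$ in the collineation group.

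For the forward direction, each $\chi_{a,b,c,d,\rho}$ is readily seen to be an $F$-semilinear bijection of the $F$-vector space $V = K \oplus K$, with companion automorphism $\rho|_F \in Aut(F)$ and invertibility guaranteed by $ad - bc \neq 0$ in $K$. A direct substitution yields $(kx, x)\chi_{a,b,c,d,\rho} = ((ak^\rho + c)x^\rho, (bk^\rho + d)x^\rho)$, so $J(k)$ is sent to $J((ak^\rho+c)(bk^\rho+d)^{-1})$, or to $J(\infty)$ when $bk^\rho+d = 0$; a similar computation handles the image of $J(\infty)$. Consequently $\chi_{a,b,c,d,\rho} \in G$, and its image under $\Psi$ is the element of $P\Gamma L(2, q^{n+1})$ represented by the matrix with rows $(a, c)$ and $(b, d)$ together with the field automorphism $\rho$. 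In particular, every element of $P\Gamma L(2, q^{n+1})$ is realized by some $\chi$.

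For the reverse direction, given $\alpha \in G$, pick $\chi = \chi_{a,b,c,d,\rho}$ with $\Psi(\chi) = \Psi(\alpha)$. Then $\beta := \alpha \chi^{-1}$ lies in $\ker \Psi$, meaning it fixes every element of $\S$ setwise. A routine multiplication confirms that $\chi_{e,0,0,e,1} \cdot \chi_{a,b,c,d,\rho} = \chi_{ea,eb,ec,ed,\rho}$, which is again of the prescribed form. Hence the proposition will follow once every $\beta \in \ker \Psi$ is shown to equal some $\chi_{e,0,0,e,1}$ with $e \in K^*$.

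The classification of $\ker \Psi$ is the main obstacle, and I would argue as follows: $\beta$ is induced by an $F$-semilinear bijection of $V$ with some companion $\tau \in Aut(F)$. Invariance of $J(\infty) = \{(x,0) : x \in K\}$ and $J(0) = \{(0, y) : y \in K\}$ forces $\beta(x,y) = (\beta_1(x), \beta_2(y))$, while invariance of $J(1) = \{(x,x) : x \in K\}$ forces $\beta_1 = \beta_2 =: \mu$. Invariance of each $J(k)$ then yields $\mu(kx) = k \mu(x)$ for all $k, x \in K$, so $\mu$ is $K$-linear, hence $\mu(x) = ex$ with $e = \mu(1) \in K^*$. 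Comparing $\mu(fx) = efx$ against the $F$-semilinearity identity $\mu(fx) = f^\tau \mu(x)$ for $f \in F$ forces $\tau = 1$, so $\beta = \chi_{e,0,0,e,1}$ as required.
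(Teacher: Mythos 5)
Your proof is correct and follows essentially the same route as the paper: verify directly that each $\chi_{a,b,c,d,\rho}$ stabilizes $\S$, identify $\ker\Psi$ as the maps $(x,y)\mapsto(ex,ey)$ with $e\in K^*$ by successively using invariance of $J(\infty)$, $J(0)$, $J(1)$ and the remaining $J(k)$, and use the fact that the $\chi$'s surject onto $P\Gamma L(2,q^{n+1})$. The only (cosmetic) difference is that you finish with an explicit coset factorization $\alpha=\beta\chi$ while the paper finishes with an order count; note also that in your composition identity the scalar should appear as $e^\rho$ rather than $e$ when $\chi_{e,0,0,e,1}$ is applied first, which does not affect the conclusion.
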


\begin{proof}
First note that $\chi = \chi_{a,b,c,d,\rho}$ is a non-singular semi-linear transformation on $V$, and thus is a collineation of $PG(2n+1,q)$; semilinearity is clear from the definition, and non-singularity follows from the condition that $ad-bc \neq 0$. For any $(x,0) \in J(\infty)$, $(x,0)^\chi = (ax^\rho,bx^\rho) \in J(a/b)$ (with the usual conventions on $\infty$), so $\chi$ maps $J(\infty)$ onto $J(a/b)$. If $(kx,x) \in J(k)$, then
$$(kx,x)^\chi = (a(kx)^\rho + cx^\rho, b(kx)^\rho + dx^\rho) = ((ak^\rho+c)x^\rho,(bk^\rho+d)x^\rho)$$
from which it follows that $\chi$ maps $J(k)$ onto $J(\frac{ak^\rho+c}{bk^\rho+d})$. Thus all of these collineations map $\S$ onto itself.

Consider the homomorphism $\Psi$. The kernel of this homomorphism is the group of collineations of $PG(2n+1,q)$ which leaves each element of $\S$ individually invariant. For any $\psi \in Ker(\Psi)$ we must have $J(\infty)\psi = J(\infty)$ and $J(0)\psi = J(0)$, which together imply $\psi$ must be of the form $(x,y)\psi = (xT,yU)$ for semi-linear transformations $T,U$ on $V$. Moreover, $J(1)\psi=J(1)$ implying $U=T$. Letting $k$ be any element of $K$, we have $J(k)\psi = J(k)$, so $(k,1)\psi = (kT,1T) = (ky,y)$ for some $y \in K^*$. This forces $\psi = \chi_{k,0,0,k,1}$ for some $k \in K\*$. It is not difficult to see that each such transformation fixes each element of $\S$, so we find that the kernel of our homomorphism has order $\frac{q^{n+1}-1}{q-1}$.

Therefore the stabilizer group of the regular spread $\S$ in $PG(2n+1,q)$ has order $\frac{q^{n+1}-1}{q-1} \times |P\Gamma L(2,q^{q+1})|$, and it is not hard to calculate that there are the same number of collineations represented by the $\chi_{a,b,c,d,\rho}$, making them the entirety of the stabilizer group.
\end{proof}

Reguli and norm surfaces are the same when $n=1$, but in higher dimensions, the isomorphism between the regular spread $\S$ of $PG(2n+1,q)$ and $PG(1,q^{n+1})$ maps norm surfaces onto structures in $PG(1,q^{n+1})$ that Bruck calls {\em covers}. Bruck~\cite{bruck:cg2} shows that every cover satisfies an equation of the form $\N(x-c) = f$ or $\N\left(\frac{x-a}{x-b}\right) = f$, where $a,b,c \in K$, $a \ne b$ and $f \in F^*$. For fixed $a,b$, or for fixed $c$, the set of covers obtained by varying $f$ over $F^*$ is a {\em linear set} of covers, which corresponds to a linear set of norm surfaces of $\S$. The two points not covered by these sets ($\infty,c$ in the first case, $a,b$ in the second) are called the {\em carriers} of the linear set.

Bruck has proven the following result about linear sets, which are collected here to illustrate an important difference between the $n=1$ and higher-dimensional cases:

\begin{prop}
\label{linear}
Let $\J$ be a norm surface in a regular spread $\S$ of $PG(2n+1,q)$ with $n>1$. Then $\J$ has a unique pair of carriers, and lies in a unique linear set of norm surfaces. If $n=1$, then the lines of $\S$ off $\J$ are partitioned into pairs of carriers of $\J$, and any norm surface (regulus) disjoint from $\J$ lies together with $\J$ in a unique linear set.
\end{prop}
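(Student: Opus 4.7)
My approach is to transfer the problem to $PG(1,q^{n+1})$ via Bruck's circle-geometry isomorphism and reduce the carrier question to a polynomial identity. In that picture, norm surfaces become covers defined by norm equations, and Proposition~\ref{Sgroup} yields a $P\Gamma L(2,q^{n+1})$-action on spread lines through the homomorphism $\Psi$. Applying an appropriate collineation, I normalize so that $\J$ has carriers $\{0,\infty\}$. A crucial simplification is that in characteristic $p$ the norm factors as $\N(y)=y\cdot y^q\cdots y^{q^n}=y^m$ where $m=(q^{n+1}-1)/(q-1)$, so $\J$ is precisely the root set of $x^m-f$ for some $f\in F^*$.

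Suppose for contradiction that $\J$ admits a second carrier pair $\{a',b'\}$. Then $\J$ is also the root set of a degree-$m$ polynomial $\Phi(x)$, namely $(x-a')^m-f'(x-b')^m$ in the generic case or $(x-a')^m-f'$ when $b'=\infty$. Because $\Phi$ and $x^m-f$ share all $m$ roots and have degree $m$, they are proportional: $\Phi(x)=c(x^m-f)$. Comparing coefficients of $x^{m-k}$ for each $0<k<m$ produces the equations $\binom{m}{k}(a'^k-f'b'^k)=0$. The combinatorial key is Lucas's theorem: writing $q=p^r$, the base-$p$ digits of $m=1+p^r+\cdots+p^{nr}$ are $1$ precisely at positions $0,r,2r,\ldots,nr$, so $\binom{m}{k}\not\equiv 0\pmod p$ exactly when $k=\sum_{i\in I}q^i$ for some $I\subseteq\{0,\ldots,n\}$. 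In particular $k=1$ and $k=q$ always qualify, and $k=q+1$ does too when $n\ge 2$. In the $b'=\infty$ case, $k=1$ immediately forces $a'=0$, so the pair collapses to $\{0,\infty\}$. In the generic case, $k=1$ and $k=q$ give $a'=f'b'$ and $a'^q=f'b'^q$, which combine to show $f'\in F^*$; the $k=q+1$ condition then yields $a'^{q+1}=f'b'^{q+1}$, and substituting $a'=f'b'$ forces $f'^q=1$. Since $f'\in F^*=\mu_{q-1}$ this forces $f'=1$ and hence $a'=b'$, a contradiction. Carrier uniqueness for $n>1$ follows, and uniqueness of the linear set (parameterized by $f$ once carriers are fixed) is then immediate.

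The main obstacle is the nonvanishing of $\binom{m}{q+1}$ modulo $p$, which requires both $q+1<m$ (equivalently $n\ge 2$) and the representation $q+1=1+q$ being a valid Lucas partial sum. Without it the argument collapses, and in fact for $n=1$ the same coefficient analysis instead produces a family of $(q-2)(q+1)$ ordered pairs $(a',b')$ with $a'=f'b'$, $f'\in F^*\setminus\{1\}$, and $\N(b')$ determined by $f$ and $f'$; quotienting by the order-swap symmetry and including the canonical pair $\{0,\infty\}$ gives exactly $q(q-1)/2$ unordered carrier pairs, which exhaust the $q(q-1)$ points of $PG(1,q^2)\setminus\J$. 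The final claim that every regulus disjoint from $\J$ lies in a unique common linear set then follows by double counting: each of the $q(q-1)/2$ linear sets through $\J$ contains $q-2$ further disjoint reguli, for $q(q-1)(q-2)/2$ total incidences, matching the known count of reguli disjoint from $\J$.
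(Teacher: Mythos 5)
The paper offers no proof of this proposition at all: it is presented as a summary of results quoted from Bruck~\cite{bruck,bruck:cg2}. Your self-contained argument is therefore necessarily a different route, and most of it is correct and genuinely more elementary than Bruck's circle-geometry development. Normalizing $\J$ to the root set of $x^m-f$ with $m=(q^{n+1}-1)/(q-1)$, and observing that a second carrier pair $\{a',b'\}$ would make $\J$ the root set of a second degree-$m$ polynomial proportional to the first, is sound (the degree is exactly $m$ because $f'\ne 1$, since $f'=1$ would put $\infty$ on the cover). The Lucas analysis is right: the base-$p$ digits of $m$ are $1$ exactly at positions $0,r,\dots,nr$, so $\binom{m}{k}\not\equiv 0$ for $k=1$, $k=q$, and --- precisely when $n\ge 2$, since otherwise $q+1=m$ is out of range --- $k=q+1$; the three resulting relations force $f'=1$ and $a'=b'$, a contradiction, and uniqueness of the linear set follows since a linear set is determined by its carriers. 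Your explicit parametrization of the surviving carrier pairs for $n=1$, namely $\{f'b',b'\}$ with $f'\ne 1$ and $\N(b')=f/f'$, together with the count $(q-2)(q+1)/2+1=q(q-1)/2$, correctly establishes the partition statement. This nicely isolates the single binomial coefficient responsible for the dimension dichotomy.

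The genuine gap is the final sentence. Matching the $q(q-1)(q-2)/2$ incidences between linear sets through $\J$ and their reguli other than $\J$ against the number of reguli disjoint from $\J$ shows only that the \emph{average} number of linear sets containing $\J$ and a given disjoint regulus is $1$; it does not exclude one regulus lying in two such linear sets while another lies in none. Indeed, the assertion that every disjoint regulus lies in \emph{at least} one common linear set with $\J$ is the substantive content of this part of the proposition, and the count alone cannot supply it. To close the argument you need to prove ``at most one'' for every disjoint regulus, after which the equality of the two totals forces ``exactly one.'' This is within reach of your own machinery: using the transitivity of the stabilizer of $\J$ on the points of $PG(1,q^2)$ off $\J$, you may assume one of two putative common linear sets has carriers $\{0,\infty\}$, so the shared regulus is $\{x:\N(x)=g\}$ with $g\ne f$; your parametrization then shows that a pair $\{f'b',b'\}$ with $\N(b')=f/f'$ and $f'\neq 1$ can be a carrier pair of that regulus only if $g=f$, a contradiction. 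As written, though, the double count by itself does not prove the claim.
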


With this result in place, we can prove an important result about how two norm surfaces/covers can intersect; the $n=1$ case follows quickly from Bruck and Bose, while the author proved the $n=2$ case in~\cite{thesis}.

\begin{prop}
\label{intersection}
Let $N_1$ and $N_2$ be distinct norm surfaces in a regular spread $\S$ of $PG(2n+1,q)$, each consisting of $\frac{q^{n+1}-1}{q-1}$ elements of the spread. Then $N_1$ and $N_2$ can intersect in at most $2\frac{q^n-1}{q-1}$ elements of the spread.
\end{prop}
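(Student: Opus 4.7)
The plan is to classify the intersection by the relative positions of the two pairs of carriers of $N_1$ and $N_2$, use the stabilizer of $\S$ to normalize them, and bound the resulting system of norm equations. For $n=1$ the result is a direct consequence of Bruck and Bose's classical fact that any three spread elements determine a unique regulus, forcing two distinct reguli to share at most $2 = 2(q-1)/(q-1)$ elements.

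For $n>1$, I would invoke Proposition~\ref{linear} to endow each $N_i$ with a unique pair of carriers $\{a_i,b_i\}$ and split on the overlap $|\{a_1,b_1\}\cap\{a_2,b_2\}|$. If the pairs coincide, Proposition~\ref{linear} places $N_1$ and $N_2$ in a single linear set, whose member covers are distinct cosets of the norm kernel $H = \{k\in K^{*}:\N(k)=1\}$ in $K^{*}$ and are therefore disjoint, giving $N_1\cap N_2 = \emptyset$. If the pairs share at most one carrier, I would apply a collineation of $\S$ from Proposition~\ref{Sgroup} (whose $P\Gamma L(2,q^{n+1})$-image contains the three-transitive group $PGL(2,q^{n+1})$ on $PG(1,q^{n+1})$) to move the carriers to standard positions: $\{0,\infty\}$ and $\{1,\infty\}$ in the one-shared-carrier case, and $\{0,\infty\}$ and $\{1,b\}$ with $b \in K \setminus \{0,1\}$ in the disjoint case. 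After normalization, $N_1 \cap N_2$ becomes the set of $k\in K$ satisfying $\N(k)=f_1$ together with either $\N(k-1)=f_2$ (one-shared) or $\N(k-1)=f_2\N(k-b)$ (disjoint).

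The final step is to bound the number of $k \in K$ satisfying such a pair of norm equations by $2(q^n-1)/(q-1)$. Each equation cuts out a degree-$(n+1)$ hypersurface in the $F$-coordinates of $k$, and the bound is equivalent to the statement that for distinct norm surfaces the union $N_1 \cup N_2$ contains at least $2q^n$ elements of $\S$. I would pursue this by combining the two norm conditions: in the one-shared case, multiplying them gives $\N(k(k-1)) = f_1 f_2$, placing $k(k-1)$ on a single norm surface of size $(q^{n+1}-1)/(q-1)$, while the map $k \mapsto k(k-1)$ is at most two-to-one (pairing $k$ with $1-k$ in odd characteristic, or $k$ with $k+1$ in even characteristic). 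Reimposing the condition $\N(k)=f_1$ should then cut each two-fiber down and restrict the admissible values of $k(k-1)$ to a substructure of size $(q^n-1)/(q-1)$ inside the norm surface, yielding the claimed bound. The disjoint case is handled analogously after clearing the denominator $\N(k-b)$ and applying a further Möbius transformation to reduce to the one-shared configuration.

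The main obstacle is the precise control in the last paragraph's reduction, specifically verifying that the restriction by $\N(k)=f_1$ selects exactly one element per two-fiber and that the allowed values of $k(k-1)$ form a set of the expected size $(q^n-1)/(q-1)$; this step depends on the multiplicative structure of $K^{*}$ modulo $H$ and the way the two linear sets containing $N_1$ and $N_2$ interact with the quadratic fibration. The $n=2$ case is carried out in~\cite{thesis} and serves as a template for the higher-$n$ argument. Sharpness of the bound is witnessed at $q=2$ in the one-shared-carrier case, where $N_1 \cap N_2 = K \setminus \{0,1\}$ has exactly $2^{n+1}-2 = 2(q^n-1)/(q-1)$ elements.
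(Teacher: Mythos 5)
Your reduction to a system of two norm equations is sound and parallels the paper's setup (the paper likewise normalizes $C_1$ to $\N(x)=1$ and takes $C_2$ to be $\N(x-a)=f\,\N(x-b)$), and your $n=1$ and coincident-carrier observations are fine. But the step that actually produces the bound $2\nu$ with $\nu=\frac{q^n-1}{q-1}$ is missing, and the sketch you give for it does not work. Multiplying the two equations to get $\N(k(k-1))=f_1f_2$ and using that $k\mapsto k(k-1)$ is at most two-to-one only bounds the solution set by $2\mu$ with $\mu=\frac{q^{n+1}-1}{q-1}$, which is weaker than the trivial bound. The claim that reimposing $\N(k)=f_1$ ``restricts the admissible values of $k(k-1)$ to a substructure of size $\nu$'' is exactly the content of the proposition and is asserted without argument; you flag it yourself as the main obstacle. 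Two further problems: the proposed M\"obius reduction of the disjoint-carrier case to the shared-carrier case cannot work, since elements of $PGL(2,q^{n+1})$ preserve whether two pairs of points of $PG(1,q^{n+1})$ meet; and the $q=2$ sharpness example lies outside the paper's standing hypothesis $q>2$.

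The paper closes the gap with a short polynomial-degree argument that you may want to compare against. Writing $\mu=q^n+\nu$, one has $(y-a)^\mu=(y^{q^n}-a^{q^n})(y-a)^\nu$ by Frobenius; multiplying the equation $(y-a)^\mu=f(y-b)^\mu$ through by $y^\nu$ and substituting the first condition $y^\mu=1$ turns it into
$$(1-a^{q^n}y^\nu)(y-a)^\nu=f(1-b^{q^n}y^\nu)(y-b)^\nu,$$
a nonzero polynomial identity of degree at most $2\nu$ in $y$, whence at most $2\nu$ common solutions. This single computation handles all carrier configurations at once (the case $\N(x-c)=f$ is even easier), so the case analysis and normalization in your plan, while not wrong, are unnecessary; the essential missing idea is using the relation $y^\mu=1$ to collapse the degree of the second equation.
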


\begin{proof}
For ease of notation, let $\mu = \frac{q^{n+1}-1}{q-1}$ and let $\nu = \frac{q^n-1}{q-1}$. As discussed above, determining the number of spread elements in which two norm surfaces in the regular spread $\S$ intersect is equivalent to determining the size of the intersection of two covers $C_1$ and $C_2$ in $PG(2,q^{n+1})$. Since all covers are isomorphic, we may assume that $C_1$ has equation $\N(x) = 1$. We assume $C_2$ has equation $\N(x-a) = f \N(x-b)$ for some $a \ne b \in K$, $f \in F^*$; the argument for the case where $C_2$ has equation $\N(x-c) = f$ is nearly identical and slightly easier. Suppose $y \in C_1 \cap C_2$. We know $y \in K^*$ and $y^\mu = 1$ since $y \in C_1$, and $(y-a)^\mu = f (y-b)^\mu$ since $y \in C_2$. Note that this latter equation has exactly $\mu$ solutions since $C_2$ is a cover, and thus is not identically zero.

Expand the second equation to yield:
$$(y^{q^n}-a^{q^n})(y-a)^\nu = f(y^{q^n}-b^{q^n})(y-b)^\nu$$

Now multiply on both sides by $y^\nu$, which is not identically zero, to obtain:
$$(y^\mu-a^{q^n}y^\nu)(y-a)^\nu = f(y^\mu-b^{q^n}y^\nu)(y-b)^\nu$$

Since $y^\mu = 1$, we can simplify to obtain:
$$(1-a^{q^n}y^\nu)(y-a)^\nu = f(1-b^{q^n}y^\nu)(y-b)^\nu$$

This equation yields a polynomial expression in $y$ of degree at most $2\nu$ satisfied by all elements of $C_1 \cap C_2$, and is not identically zero. Thus there are at most $2\nu$ values for $y$ satisfying this equation, proving the result.
\end{proof}

A second key difference between the $n=1$ and higher-dimensional cases is the number of potential replacement sets for a norm surface. When sorting isomorphism classes, it becomes important to understand what is happening with all potential replacements under collineations of $PG(2n+1,q)$. To this end, recall that for any $\sigma \in Aut(K/F)$ we have $J^\sigma(k) = \{(kx^\sigma,x):x \in K\}$ is an $n$-dimensional subspace, and our replacements for $\J_f$ are the sets $\J^\sigma_f = \{J^\sigma(k): \N(k) = f\}$. Define $\S^\sigma = \{J(\infty)\} \cap \{J^\sigma(k):k \in K\}$, and let $\lambda_\sigma$ be the collineation defined via $(x,y)\lambda_\sigma = (x,y^\sigma)$. Clearly $\lambda_\sigma$ leaves $J(\infty)$ invariant and maps $J^\sigma(k)$ onto $J(k)$, hence each of the $\S^\sigma$ is a regular spread, and there is a collineation of $PG(2n+1,q)$ that maps $\S$ to $\S^\sigma$ that has the net effect of replacing all of the spaces of the norm surfaces $\J_f$ in $\S$ with $\J^\sigma_f$.

Our final general result describes the stabilizer groups associated with various collections of the norm surfaces in ${\mathcal J} = \{\J^\sigma_f: \sigma \in Aut(K/F), f \in F^*\}$.

\begin{thm}
\label{lingroups}
In $PG(2n+1,q)$, where $n \ge 1$ and $q > 2$ is a prime power, and $(n,q) \ne (1,3)$, let $\S$ be the regular spread $\{J(\infty)\} \cup \{J(k):k \in K\}$ with linear set $\L = \{\J_f: f \in F^*\} \subset {\mathcal J}$. Then 
\begin{enumerate}
\item The collineation group of $PG(2n+1,q)$ leaving the set $\L$ of norm surfaces invariant consists of the collineations $G_\L = \{\chi_{a,0,0,d,\mu}:a,d \in K^*, ad \neq 0, \mu \in Aut(K)\} \cup \{\chi_{0,b,c,0,\mu}:b,c \in K^*, bc \neq 0, \mu \in Aut(K)\}$;
\item the permutation action of $G_\L$ acting on the sets $\J_f \in L$ is action-isomorphic to the group of transformations $\Xi = \{\xi^\pm_{\alpha,\tau}:\alpha \in F^*,\tau \in Aut(F)\}$ acting via $(\J_f)^{\xi^\pm_{\alpha,\tau}} = \J_{\alpha f^{\pm\tau}}$; 
\item the collineation group of $PG(2n+1,q)$ leaving the set ${\mathcal J}$ of norm surfaces invariant consists of the collineations $G_{\mathcal J} = \{\chi\lambda_\sigma:\chi \in G_\L, \sigma \in Aut(K/F)\}$; and
\item the permutation action of $G_{\mathcal J}$ acting on the sets $\J^\sigma \in {\mathcal J}$ is action-isomorphic to the group of transformations $\Upsilon = \{\upsilon^\pm_{\alpha,\tau,\sigma}:\alpha \in F^*,\tau \in Aut(F),\sigma \in Aut(K/F)\}$ acting via $(\J^\rho_f)^{\upsilon^\pm_{\alpha,\tau,\sigma}} = \J^{\rho^{\pm 1}\sigma^{-1}}_{\alpha f^{\pm\tau}}$.
\end{enumerate}
\end{thm}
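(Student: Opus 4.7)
My plan is to prove the four parts in sequence, with (1) and (3) establishing the group structure and (2) and (4) reading off the actions. The exclusion $(n,q) = (1,3)$ presumably handles a small-case exception where the linear set has only two reguli, allowing exceptional symmetries.

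For part (1), the key observation is that $\bigcup_{f \in F^*} \J_f$ equals $PG(2n+1,q) \setminus (J(0) \cup J(\infty))$, so any collineation stabilizing $\L$ must permute the complementary pair $\{J(0), J(\infty)\}$ and in particular maps $\S$ to $\S$. Proposition \ref{Sgroup} then gives $\phi = \chi_{a,b,c,d,\rho}$, and the carrier-pair condition forces either $b = c = 0$ or $a = d = 0$. The reverse direction is a direct computation: $\chi_{a,0,0,d,\mu}$ sends $J(k)$ to $J((a/d)k^\mu)$, so $f = \N(k)$ maps to $\N(a/d) f^{\mu|_F}$, and similarly $\chi_{0,b,c,0,\mu}$ yields $f \mapsto \N(c/b) f^{-\mu|_F}$. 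Part (2) is then immediate: define the homomorphism $G_\L \to \Xi$ via $\chi_{a,0,0,d,\mu} \mapsto \xi^+_{\N(a/d),\mu|_F}$ and $\chi_{0,b,c,0,\mu} \mapsto \xi^-_{\N(c/b),\mu|_F}$, check surjectivity by picking appropriate representatives, and observe the actions agree by construction.

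Part (3) is where the main work lies. Any $\phi \in G_{\mathcal J}$ again preserves $\{J(0), J(\infty)\}$ by the same point-set argument. For the containment $\{\chi\lambda_\sigma\} \subseteq G_{\mathcal J}$, I combine the preamble's formula $\lambda_\sigma : \J^\rho_f \mapsto \J^{\sigma^{-1}\rho}_f$ with the extended part-(1) computation: $\chi_{a,0,0,d,\mu}$ maps $J^\rho(k) \mapsto J^\rho(ak^\mu/d^\rho)$, preserving $\rho$ (using $\N(d^\rho) = \N(d)$ since $\rho \in Aut(K/F)$), while $\chi_{0,b,c,0,\mu}$ inverts $\rho$ to $\rho^{-1}$. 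The main obstacle is the reverse containment: given $\phi \in G_{\mathcal J}$, I must produce $\sigma \in Aut(K/F)$ so that $\lambda_\sigma^{-1}\phi$ preserves $\S$ and thus lies in $G_\L$ by part (1). This reduces to showing that $\phi(\S)$ coincides with one of the spreads $\S^{\sigma^{-1}}$, rather than a hybrid regular spread drawing elements from several $\S^\rho$. I expect to prove this by taking any regulus of $\S$ through the two carriers, observing that its image under $\phi$ is another regulus through the same carriers whose third element lies in a unique $\S^{\rho'}$, and then using that $\phi$ permutes norm surfaces (so the $\rho$-index cannot vary within a single $\J_f$) together with Proposition \ref{intersection} to propagate this rigidity across all norm surfaces, forcing $\rho'$ to be uniform on all of $\phi(\S)$. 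Part (4) then follows by composition: $\chi\lambda_\sigma$ acts on $\J^\rho_f$ by first $\lambda_\sigma$ (shifting $\rho$ to $\sigma^{-1}\rho$) and then $\chi$ (acting as in part (2) on the shifted surface), yielding the $\upsilon^\pm_{\alpha,\tau,\sigma}$ formulas up to a $\sigma \leftrightarrow \sigma^{-1}$ convention-swap in the $-$ case.
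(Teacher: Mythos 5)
Your architecture for parts (1), (2) and (4) matches the paper's, and your part (1) opening (the union of the $\J_f$ is the complement of $J(0)\cup J(\infty)$, so the carrier pair and hence $\S$ are preserved, then Proposition~\ref{Sgroup} plus the carrier condition forces $b=c=0$ or $a=d=0$) is a clean equivalent of the paper's intersection-count argument. For the reverse inclusion in part (3) you also take a legitimately different route: you reduce $\phi$ to $G_\L$ by composing with a suitable $\lambda_\sigma$, whereas the paper bounds $|H|\le (n+1)|G_\L|$ by orbit--stabilizer (stabilizer of $\S$ in $H$ sits inside $G_\L$, orbit of $\S$ has size at most $n+1$) and then matches this against $|G_{\mathcal J}|=(n+1)|G_\L|$, checking along the way that no collapsing $\chi_1\lambda_{\sigma_1}=\chi_2\lambda_{\sigma_2}$ occurs. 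Both reductions hinge on the same key claim: that $\phi$ permutes the set of regular spreads $\{\S^\sigma:\sigma\in Aut(K/F)\}$, i.e.\ that $\phi(\S)$ is one of the $\S^\sigma$ rather than some other regular spread.

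It is your proof of that key claim that has a genuine gap. Your mechanism --- take a regulus of $\S$ through the two carriers, look at where its third element lands, and propagate with Proposition~\ref{intersection} --- does not close. First, Proposition~\ref{intersection} bounds the intersection of two norm surfaces lying in the \emph{same} regular spread; it says nothing about whether the image spread $\phi(\S)$ coincides with some $\S^\sigma$, which is the question here. Second, for odd $q$ a regulus through the carriers corresponds to a subline $\{0,\infty\}\cup kF^*$, on which the norm takes only $(q-1)/2$ values, so such a regulus meets only half the norm surfaces and cannot by itself ``propagate rigidity across all norm surfaces.'' Third, even if you show the image regulus lies entirely in some $\S^\epsilon$, that gives only $q+1$ common spaces between $\phi(\S)$ and $\S^\epsilon$, which is exactly the maximum intersection of two \emph{distinct} regular spreads and so does not force equality. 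The argument that works is the paper's: for $n>1$ each norm surface has $\frac{q^{n+1}-1}{q-1}>q+1$ spaces and therefore lies in a unique regular spread, so $\phi(\J_f)=\J^\mu_{f'}$ immediately forces $\phi(\S)=\S^\mu$; for $n=1$ one instead uses that $\L$ contains $q-1\ge 3$ reguli, so by pigeonhole two of them map into the same $\S^\epsilon$, and two disjoint reguli supply $2q+2>q+1$ common lines, which does force $\phi(\S)=\S^\epsilon$. (This is also precisely where the exclusion $(n,q)=(1,3)$ enters, as you suspected.) With that claim repaired, the rest of your plan, including the $\sigma\leftrightarrow\sigma^{-1}$ relabelling in the minus case of part (4), goes through.
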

\begin{proof}
Suppose $\phi$ is a collineation of $PG(2n+1,q)$ that maps $\L$ onto itself. Since $\phi$ maps $q^2-1 > q+1$ spaces of $\S$ onto spaces of $\S$, $\phi$ must leave $\S$ invariant. So by Proposition~\ref{Sgroup} $\phi = \chi_{a,b,c,d,\mu}$ for some $a,b,c,d \in K$, $ad-bc \neq 0$ and $\mu \in Aut(K)$. Moreover, since $\phi$ leaves $\S$ invariant and $\L$ invariant, it must leave $\{J(\infty),J(0)\}$ invariant, meaning $\phi$ must either leave both $J(\infty)$ and $J(0)$ invariant, or interchange them. In the former case, we must have $b=c=0$, while in the latter we must have $a=d=0$, hence $\phi$ must be in the set of collineations $G_\L$.

Let $\chi = \chi_{a,0,0,d,\mu} \in G_\L$. If $(kx,x) \in J(k)$, then $(kx,x)^\chi = (ak^\mu x^\mu,dx^\mu) \in J(ak^\mu/d)$. Note that $\N(a/d) = \alpha$ for some $\alpha \in F^*$, and there exists $\tau \in Aut(F)$ such that $f^\mu = f^\tau$ for all $f \in F$. So if $\N(k) = f$, then we have $\N(ak^\mu/d) = \alpha f^\tau$. $\alpha$ and $\tau$ depend only on $a$, $d$, and $\mu$, so for all $f \in F^*$ we must have $(\J_f)^\chi = \J_{\alpha f^\tau}$ showing that $\chi$ leaves $\L$ invariant. The calculation for $\chi = \chi_{0,b,c,0,\mu}$ is similar, and these results together show that $G_\L$ is the entire group of collineations of $PG(2n+1,q)$ leaving $\L$ invariant. Moreover, these calculations show half of the claimed action isomorphism, namely that every element of $G_\L$ acts on $\L$ as some $\xi^\pm_{\alpha,\tau}$. To show that all such elements can be obtained, note that $\chi_{a,0,0,1,\tau}$ acts as $\xi^+_{\alpha,\tau}$ for any $a \in K^*$ with $\N(a) = \alpha$, and $\chi_{0,b,1,0,\tau}$ acts as $\xi^-_{\alpha,\tau}$ for any $b \in K^*$ with $\N(b) = \alpha$.

Let $H$ be the group of collineations of $PG(2n+1,q)$ that leave ${\mathcal J}$ invariant, and let $\phi \in H$. Suppose first that $n=1$. In this case, since $q > 3$, ${\mathcal J}$ contains at least three reguli lying in $\S$ and at least three reguli lying in $\S^q$. Thus $\phi$ must map at least two reguli $R_1$ and $R_2$ of $\S$ onto reguli of either $\S$ or $\S^q$. Since $R_1$ and $R_2$ are disjoint reguli, they contain $2q+2 > q+1$ lines, forcing $\phi$ to map $\S$ onto either itself or onto $\S^q$; the analogous statement for $\S^*$ implies $\phi$ leaves the set of regular spreads $\{\S,\S^q\}$ invariant. If $n > 1$, the argument is similar, but slightly easier and works for $q > 2$. Since any norm surface has more than $q+1$ spaces it lies in a unique regular spread, hence if $\phi$ maps $\J^\rho_f$ to $\J^\mu_{f'}$, it must map $\S^\rho$ to $\S^\mu$; thus $\phi$ must leave the set of regular spreads $\{\S^\sigma:\sigma \in Aut(K/F)\}$ invariant.

Since the group of all collineations of $PG(2n+1,q)$ leaving $\S$ invariant has order $|G_\L|$, the stabilizer of $\S$ in $H$ is certainly no bigger. We also know that the orbit of $\S$ under $H$ has size at most $n+1$, hence $|H|$ is at most $(n+1)|G_\L|$. Now, consider $\chi \lambda_\sigma \in G_{\mathcal J}$, where $\chi \in G_\L$ and $\sigma \in Aut(K/F)$, and first look at the case where $\chi = \chi_{a,0,0,d,\mu}$ for $a,d \in K^*$ and $\mu \in Aut(K)$. Since $d$ is non-zero, we can can write $N(a/d^\rho) = \alpha$ for some $\alpha \in F^*$. Looking at an arbitrary point of $J^\rho(k)$, we have
\begin{align}
(kx^\rho,x)^{\chi \lambda_\sigma} &= (a k^\mu x^{\rho \mu},d x^\mu)^{\lambda_\sigma} \nonumber\\
                                                    &= (a k^\mu x^{\rho \mu},d^\sigma x^{\sigma \mu}) \nonumber\\
                                                    &= \left(\frac{a k^\mu}{d^\rho}(d^\sigma x^{\sigma \mu})^{\rho \sigma^{-1}},d^\sigma x^{\sigma \mu}\right) \nonumber
\end{align}
This shows that $\chi \lambda_\sigma$ maps every space $J^\rho(k)$ to $J^{\rho \sigma^{-1}}(\frac{a k^\mu}{d^\rho})$. For all $k$ with fixed norm $\N(k) = f$, we have $\N(\frac{a k^\tau}{d^\rho}) = \alpha f^\mu$; moreover, since $\mu$ is an automorphism of $K$, there exists an automorphism $\tau \in Aut(F)$ such that $f^\mu = f^\tau$ for all $f \in F$. Thus we find $\chi \lambda_\sigma$ maps $\J^\rho_f$ to $\J^{\rho \sigma^{-1}}_{\alpha f^\tau}$ for some $\alpha \in F^*$, $\tau \in Aut(F)$, and thus leaves ${\mathcal J}$ invariant.

Now we address the case where $\chi = \chi_{0,b,c,0,\mu}$ for some $b,c \in K^*$ and $\mu \in Aut(K)$. Here $c$ is non-zero, so we can write $N(b/c^{\rho^{-1}}) = \alpha$ for some $\alpha \in F^*$. Again we calculate
\begin{align}
(kx^\rho,x)^{\chi \lambda_\sigma} &= (bx^\tau,ck^\tau x^{\rho \tau})^{\lambda_\sigma} \nonumber\\
                                            &= (b x^\tau,c^\sigma k^{\sigma\tau} x^{\rho \sigma \tau}) \nonumber\\
                                            &= \left(\frac{b}{c^{\rho^{-1}} k^{\rho^{-1}\tau}}(c^\sigma k^{\sigma\tau} x^{\rho\sigma\tau})^{\rho^{-1}\sigma^{-1}},c^\sigma k^{\sigma\tau} x^{\rho\sigma\tau}\right) \nonumber
\end{align}
Using calculations similar to the above, and again letting $\tau \in Aut(F)$ such that $f^\tau = f^\mu$ for all $f \in F$, we see that $\chi \lambda_\sigma$ maps $\J^\rho_f$ to $\J^{\rho^{-1} \sigma^{-1}}_{\alpha f^{-\tau}}$, and thus also leaves ${\mathcal J}$ invariant.

We have shown that $G_{\mathcal J} \subseteq H$. To show equality, note that $G_{\mathcal J}$ ostensibly has $(n+1)|G_\L|$ elements, but there could in principle be some collapsing wherein $\chi_1 \lambda_{\sigma_1} = \chi_2 \lambda_{\sigma_2}$. But if this occurs, we have $\chi_1 = \chi_2 \lambda_{\sigma_1 \sigma_2^{-1}}$. Both $\chi_1$ and $\chi_2$ leaves $\S$ invariant, which forces $\lambda_{\sigma_1 \sigma_2^{-1}}$ to leave $\S$ invariant as well, but this only happens if $\sigma_1 = \sigma_2$, in which case $\chi_1 = \chi_2$. Thus $|G_{\mathcal J}| = (n+1)|G_\L|$, proving it is the entirety of $H$.

The above calculations substantially show the action isomorphism of part 4 as well, since it shows that every $\chi \lambda_\sigma$ acts on ${\mathcal J}$ in the fashion of $\upsilon^\pm_{\alpha,\tau,\sigma}$ for some $\alpha \in F^*$, $\tau \in Aut(F)$ and $\sigma \in Aut(K/F)$. As above, we note that $\chi_{a,0,0,1,\tau} \lambda_\sigma$ acts as $\upsilon^+_{\alpha,\tau,\sigma}$ for any $a \in K^*$ with $\N(a) = \alpha$, and $\chi_{0,b,1,0,\tau} \lambda_\sigma$ acts as $\upsilon^-_{\alpha,\tau,\sigma}$ for any $b \in K^*$ with $\N(b) = \alpha$. Hence there is an action isomorphism between $G_{\mathcal J}$ and $\Upsilon$, completing the proof.
\end{proof}

\section {Two-Dimensional Andr\'e Planes}

Now that we have our basic machinery in place, we begin our enumeration with the $n=1$ case, two-dimensional Andr\'e planes. Refreshing terminology, let $\S = \{J(\infty)\} \cup \{J(k):k \in K\}$ be a regular spread of $PG(3,q)$, $q >2$ a prime power, and let $\L = \{\J_f: k \in F^*\}$ be a linear set of reguli in $\S$. Letting ${\mathcal I}$ vary over all subsets of $F^*$, we can create every two-dimensional Andr\'e plane of order $q^2$ with the set of spreads $$\AI = \{J(\infty),J(0)\} \cup \bigcup_{f \in {\mathcal I}} \J^q_f \cup \bigcup_{f \in F^* \setminus {\mathcal I}} \J_f$$

We call the size of the set ${\mathcal I}$ the {\em index} of the Andr\'e spread. An Andr\'e plane of index either 0 or $q-1$ is obviously regular. Albert~\cite{albert} has shown that any plane obtained by switching a single regulus in a regular spread is necessarily the Hall plane, thus any Andr\'e plane of index either 1 or $q-2$ is a Hall plane. In what follows, we exclude these cases from consideration.

Since we are interested in sorting isomorphism classes, we may further restrict our attention to Andr\'e spreads of index at most $\frac{q-1}{2}$. Under the collineation $\lambda_q$, $\AI$ is isomorphic to $\A_{F^* \setminus {\mathcal I}}$ so every equivalence class of isomorphic Andr\'e planes contains at least one representative of index at most $\frac{q-1}{2}$.

These constraints show that the only Andr\'e spreads of $PG(3,3)$ and $PG(3,4)$ are the regular spread and the Hall spread. Thus we may restrict out attention to Andr\'e spreads of $PG(3,q)$ with $q \ge 5$, and of index $2 \le n \le \frac{q-1}{2}$. We begin with the following lemma which describes how a regular spread can meet an Andr\'e spread.

\begin{lemma}
\label{alemma}
Let $\AI$ be an Andr\'e spread, with index $n$ satisfying $2 \le n \le \frac{q-1}{2}$ in $PG(3,q)$, $q \ge 5$. The regular spread $\S$ meets $\AI$ in $2+(q-1-n)(q+1) > 2q+2$ lines, $\S^q$ meets $\AI$ in $2+n(q+1) > 2q+2$ lines, and no other regular spread meets $\AI$ in more than $2q+2$ lines.
\end{lemma}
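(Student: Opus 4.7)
The plan is to compute the first two intersection sizes by direct bookkeeping, and to handle the third (the ``no other'' claim) by bounding the intersection of two distinct regular spreads and exploiting the fact that $\A_{\mathcal I}$ is contained in the union $\S \cup \S^q$.

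First I would observe that every line of $\A_{\mathcal I}$ lies in $\S$ or in $\S^q$: the two carriers $J(\infty), J(0)$ lie in both (since $0^q = 0$), each unreplaced norm surface $\J_f$ sits inside $\S$, and each replaced norm surface $\J^q_f$ sits inside $\S^q$. From this, $\S \cap \A_{\mathcal I}$ consists exactly of the two carriers together with the $q-1-n$ untouched norm surfaces $\J_f$, giving $2+(q-1-n)(q+1)$ lines. Analogously $\S^q \cap \A_{\mathcal I}$ consists of the two carriers together with the $n$ replacement norm surfaces $\J^q_f$, giving $2+n(q+1)$ lines. A short check using $2 \le n \le (q-1)/2$ and $q \ge 5$ shows both counts exceed $2q+2$; this is the only place the hypotheses on $n$ and $q$ are used, and I would record the worst cases ($n=2$ on one side, $n = (q-1)/2$ on the other) to make sure nothing slips.

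For the third part, suppose $\S'$ is a regular spread distinct from both $\S$ and $\S^q$. The section's introductory discussion (following Bruck and Bose) records that any two distinct regular spreads meet in at most $q+1$ lines, because three common lines force a common regulus and a regulus plus one further line determines a regular spread. Applying this bound to $\S' \cap \S$ and to $\S' \cap \S^q$, and using $\A_{\mathcal I} \subseteq \S \cup \S^q$, I get
\[
|\S' \cap \A_{\mathcal I}| \;\le\; |\S' \cap \S| + |\S' \cap \S^q| \;\le\; (q+1) + (q+1) \;=\; 2q+2,
\]
as required.

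I do not expect a real obstacle here: the containment $\A_{\mathcal I} \subseteq \S \cup \S^q$ is immediate from the construction, and the $q+1$ bound for the intersection of two distinct regular spreads has already been cited in the text. The most delicate point is simply to ensure the numerical inequalities $2+(q-1-n)(q+1) > 2q+2$ and $2+n(q+1) > 2q+2$ hold throughout the range $2 \le n \le (q-1)/2$ with $q \ge 5$, which is where the excluded small cases ($q \in \{3,4\}$ and the Hall/regular indices) are actually felt.
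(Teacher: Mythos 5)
Your proposal is correct and follows essentially the same route as the paper: the first two counts are read off directly from the construction, and the third claim is handled by writing $\AI$ as the union of its intersections with $\S$ and $\S^q$ and applying the $q+1$ bound on the intersection of two distinct regular spreads to each piece. The numerical checks you flag do work out exactly as you describe (the binding case is $n=\frac{q-1}{2}$ with $q\ge 5$, giving $q-1-n\ge 2$).
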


\begin{proof}
The intersection sizes of $\AI$ with $\S$ and $\S^q$ are simple consequences of the construction of $\AI$, and the lower bounds on those sizes are a simple consequence of the bounds on $n$. Let $\T$ be a regular spread distinct from $\S$ and $\S^q$. We can write $\AI$ as the union of two partial spreads $\AI = (\S \cap \AI) \cup (\S^q \cap \AI)$. Two distinct regular spreads can meet in at most $q+1$ lines, so $\T$ meets each of the two components of this union in at most $q+1$ lines, making the total size of the intersection at most $2q+2$.
\end{proof}

With this lemma in place, our key result to sort isomorphisms between two-dimensional Andr\'e planes is straightforward.

\begin{prop}
\label{andre}
Let $\AI$ and $\AH$ be Andr\'e spreads of $PG(3,q)$ derived from the same regular spread $\S$, with indices between $2$ and $\frac{q-1}{2}$, inclusive. Then $\AI$ and $\AH$ are isomorphic if and only if:
\begin{enumerate}
\item there is a collineation of $PG(2n+1,q)$ leaving $\S$ invariant that maps the set of reguli $\{\J_f:f \in {\mathcal I}\}$ onto the set of reguli $\{\J_f:f \in {\mathcal H}\}$; or
\item there is a collineation of $PG(2n+1,q)$ leaving $\S$ invariant that maps the set of reguli $\{\J_f:f \in {\mathcal I}\}$ onto the set of reguli $\{\J_f:f \in F^* \setminus {\mathcal H}\}$
\end{enumerate}
\end{prop}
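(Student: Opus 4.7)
The plan is to combine Lemma~\ref{alemma}, which pinpoints the only regular spreads meeting an Andr\'e spread of small index in more than $2q+2$ lines, with Theorem~\ref{lingroups}, which describes the stabilizer of $\L$, to reduce the isomorphism question to the action of $G_\L$ on the indices.

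For the \emph{if} direction, suppose condition (1) holds with witnessing collineation $\phi$. Because $|\I|\ge 2$, $\phi$ sends two disjoint reguli of $\L$ into $\L$, and Proposition~\ref{linear} forces $\phi$ to leave $\L$ invariant; Theorem~\ref{lingroups}(1) then places $\phi$ in $G_\L$. Since $\mathrm{Aut}(K/F)=\{\mathrm{id},q\}$ has order two, every element is its own inverse, so Theorem~\ref{lingroups}(4) implies that $\phi$ induces the same permutation of indices on $\{\J^q_f\}$ as on $\{\J_f\}$, yielding $\phi(\AI)=\AH$. If instead condition (2) holds, the analogous argument gives $\phi(\AI)=\A_{F^*\setminus\H}$, and postcomposing with $\lambda_q$, which sends $\A_{F^*\setminus\H}$ to $\AH$, yields the desired isomorphism.

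For the \emph{only if} direction, let $\phi$ be a collineation of $PG(3,q)$ with $\phi(\AI)=\AH$, as is provided by the Bruck-Bose correspondence. The spreads $\phi(\S)$ and $\phi(\S^q)$ meet $\AH$ in the same counts $2+(q-1-|\I|)(q+1)$ and $2+|\I|(q+1)$ in which $\S$ and $\S^q$ meet $\AI$, both exceeding $2q+2$; Lemma~\ref{alemma} applied to $\AH$ therefore forces both images into $\{\S,\S^q\}$. Since $\phi$ is a bijection on regular spreads, exactly two cases arise: $\phi$ fixes each of $\S,\S^q$, or it swaps them.

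In the fix case, $\phi$ preserves $\S\cap\S^q=\{J(0),J(\infty)\}$ setwise and hence lies in $G_\L$ by Theorem~\ref{lingroups}(1); comparing $\phi(\S\cap\AI)$ with $\S\cap\AH$ shows that $\phi$ maps $\{\J_f:f\notin\I\}$ onto $\{\J_f:f\notin\H\}$, and taking complements inside $\L$ yields condition (1). In the swap case, $\lambda_q\phi$ preserves both $\S$ and $\S^q$ and sends $\AI$ to $\A_{F^*\setminus\H}$; applying the fix-case reasoning to this isomorphism produces a collineation in $G_\L$ sending $\{\J_f:f\in\I\}$ onto $\{\J_f:f\in F^*\setminus\H\}$, establishing condition (2). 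The principal technical care is in checking that every invocation of Lemma~\ref{alemma} and Proposition~\ref{linear} is legitimate; the hypothesis $2\le|\I|,|\H|\le(q-1)/2$ with $q\ge 5$ ensures both that two disjoint reguli of $\L$ are always available (so that $\L$ is pinned down by its intersection with $\AI$ or with $\A_{F^*\setminus\H}$) and that the relevant intersection sizes exceed $2q+2$, and these bounds survive the complementation $\H\leftrightarrow F^*\setminus\H$ required in the swap case.
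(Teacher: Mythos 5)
Your proof is correct and follows essentially the same route as the paper: Lemma~\ref{alemma} yields the fix-or-swap dichotomy on $\{\S,\S^q\}$, the fix case produces condition (1), and composing with $\lambda_q$ handles the swap case and condition (2). The one substantive difference is in the fix case, where you pin $\phi$ into $G_\L$ via preservation of $\S\cap\S^q=\{J(0),J(\infty)\}$ (really an appeal to Proposition~\ref{Sgroup} plus the computation inside the proof of Theorem~\ref{lingroups}, rather than the statement of part (1) itself) and then let $G_\L$ permute $\L$, whereas the paper argues directly by pigeonhole that each image regulus must share at least $3$ lines with, hence equal, some target regulus; both mechanisms work, and your treatment of the ``if'' direction is more explicit than the paper's.
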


\begin{proof}
The reverse direction is relatively straightforward: clearly if there is an automorphism of $\S$ that maps the set of reguli $\{\J_f:f \in {\mathcal I}\}$ onto the set of reguli $\{\J_f:f \in {\mathcal H}\}$, it is an explicit isomorphism from $\AI$ to $\AH$. If there is an automorphism $\psi$ of $\S$ that maps $\{\J_f:f \in {\mathcal I}\}$ to $\{\J_f:f \in F^* \setminus {\mathcal H}\}$, then $\psi \lambda_q$ maps $\AI$ to $\AH$, since $\lambda_q$ interchanges $\J_f$ and $\J^q_f$ for all $f \in F^*$, resulting in a spread derived from $\S$ with exactly the reguli in $\{\J_f:f \in {\mathcal H}\}$ reversed, namely $\AH$.

Now suppose $\AI$ and $\AH$ are isomorphic, with collineation $\phi$ mapping $\AI$ to $\AH$. By Lemma~\ref{alemma}, each of $\AI$ and $\AH$ meets $\S$ and $\S^q$ in more than $2q+2$ lines, and meet no other regular spreads in that many lines. Hence $\phi$ must either leave $\S$ and $\S^q$ invariant, or must interchange them. The former case forces $\AI$ and $\AH$ to have the same index, while the latter forces $\AI$ and $\AH$ to have indices summing to $q-1$. By hypothesis the indices of $\AI$ and $\AH$ are both at most $\frac{q-1}{2}$, hence this case only occurs if $\AI$ and $\AH$ both have index $\frac{q-1}{2}$.

Suppose first the isomorphism $\phi$ leaves $\S$ invariant. Then $\phi$ maps $\AI \cap \S$ to $\AH \cap \S$, and thus must map the set of reguli in $\{\J_f:f \in {\mathcal I}\}$ onto a set of reguli contained in the union of lines of the reguli in $\{\J_f:f \in {\mathcal H}\}$. Since $\{\J_f:f \in {\mathcal H}\}$ has at most $\frac{q-1}{2}$ reguli, by the pigeonhole principle, the image of each regulus in $\{\J_f:f \in {\mathcal I}\}$ under $\phi$ must meet some regulus in $\{\J_f:f \in {\mathcal H}\}$ in at least 3 lines, forcing it to be identical to one of those reguli. Hence every regulus in $\{\J_f:f \in {\mathcal I}\}$ must map under $\phi$ onto a regulus in $\{\J_f:f \in {\mathcal H}\}$, and the fact that the indices of the two spreads are the same forces $\phi$ to map the set of reguli $\{\J_f:f \in {\mathcal I}\}$ onto the set of reguli $\{\J_f:f \in {\mathcal H}\}$.

If the isomorphism $\phi$ interchanges $\S$ and $\S^q$, we know that the index of $\AI$ and $\AH$ is $\frac{q-1}{2}$. Consider the collineation $\psi = \lambda_q \phi$. For any $f \in {\mathcal I}$, $\J_f^{\lambda_q} = \J^q_f$ is a regulus in $\AI$ and also in $\S^q$. Applying $\phi$ to this regulus gives a regulus $J$, which lies in both $\AH = \AI^\phi$ and $\S = (\S^q)^\phi$. Thus $J$ must be a subset of the union of the reguli $\{\J_f:f \in F^* \setminus {\mathcal H}\}$. But since the index of $\AH$ is $\frac{q-1}{2}$, this union is of $\frac{q-1}{2}$ reguli, and as before $J$ must share at least 3 lines with one of the $\J_f$ for $f \in F^* \setminus {\mathcal H}$, and thus must be equal to that regulus. Therefore, $\psi$ is a collineation of $PG(2n+1,q)$ leaving $\S$ invariant that maps the set of reguli $\{\J_f:f \in {\mathcal I}\}$ onto the set of reguli $\{\J_f:f \in F^* \setminus {\mathcal H}\}$, completing the proof.
\end{proof}

Suppose $\{\J_f:f \in {\mathcal I}\}$ and $\{\J_f:f \in {\mathcal H}\}$ are two sets of reguli with size between 2 and $\frac{q-1}{2}$ inclusive, in a linear set $\L$  of a regular spread $\S$, and we wish to determine if they generate isomorphic Andr\'e spreads, and thus isomorphic Andr\'e planes. From Bruck~\cite{bruck} $\L$ is the only linear set of reguli containing $\{\J_f:f \in {\mathcal I}\}$, and also the only linear set containing $\{\J_f:f \in {\mathcal H}\}$. Moreover, since ${\mathcal I}$ and ${\mathcal H}$ have less than $\frac{q-1}{2}$ elements, the complementary sets of reguli $\{\J_f:f \in F^* \setminus {\mathcal I}\}$ and $\{\J_f:f \in F^* \setminus {\mathcal H}\}$ also have at least two elements, and thus are only contained in the linear set $\L$. Thus any collineation of $PG(2n+1,q)$ leaving $\S$ invariant which maps $\{\J_f:f \in {\mathcal I}\}$ to $\{\J_f:f \in {\mathcal H}\}$ or its complement in $\L$ must leave $\L$ invariant. This allows us to apply Theorem~\ref{lingroups}, parts 1 and 2, from which we find that the spreads $\AI$ and $\AH$ are isomorphic if and only if there exist $\alpha \in F^*$ and $\tau \in Aut(F)$ such that ${\mathcal I}^{\xi^\pm_{\alpha,\tau}} = {\mathcal H}$, for some choice of sign, or in the case where $|{\mathcal I}|=\frac{q-1}{2}$, there exist $\alpha \in F^*$ and $\tau \in Aut(F)$ such that ${\mathcal I}^{\xi^\pm_{\alpha,\tau}} = F^* \setminus {\mathcal H}$, for some choice of sign.

For $q=5$, this problem is tractable by hand, since the only case we have to deal with is that of index $2 = \frac{q-1}{2}$. One possible set of size 2 is $\{\J_1,\J_2\}$. There are no nontrivial automorphisms of $GF(5)$, so we see this set maps to $\{\J_2,\J_4\}$, $\{\J_1,\J_3\}$ and $\{\J_3,\J_4\}$ under $\xi^+_{\alpha,1}$ for $\alpha \in GF(5) \setminus\{0\}$. The inversion map $\xi^-_{1,1}$ interchanges $\{\J_1,\J_2\}$ with $\{\J_1,\J_3\}$ and $\{\J_2,\J_4\}$ with $\{\J_3,\J_4\}$, and the complement of each of these sets is already represented, so these four sets form one orbit under $\xi$. It is easy to see that the remaining two pairs, $\{\J_1,\J_4\}$ and $\{\J_2,\J_3\}$, form a second orbit. Hence there are two Andr\'e planes of order 25 with index 2. This is validated by the enumeration of all translation planes of order 25 by Czerwinski and Oakden~\cite{czoak}, where 5 subregular spreads are found in $PG(3,5)$: the regular spread, a Hall spread, a subregular spread from a non-linear triple, and two Andr\'e planes, one of which is in fact the regular nearfield plane of order 25. Using MAGMA~\cite{magma}, we have automated this calculation for small $q$; the code implementing this enumeration is in Appendix A. For some small values of $q$, all of the two-dimensional Andr\'e planes, excepting the Desarguesian and Hall planes, can be obtained from the sets of $\J$s in Table~\ref{andreenum}.

\begin{table}
\label{andreenum}
\caption{Enumeration of Andr\'e planes of order $q^2$ with index at least 2 for small $q$}
\begin{tabular}{ |c|c|lr| }
\hline
$q$ & {\bf Number} & \multicolumn{2}{c}{\bf Representatives}\\
\hline\hline
5 & 2 & $\{1,2\},\{1,4\}$&\\
\hline
7 & 6 & $\{3,4\},\{4,6\},\{3,5\},\{3,5,6\},\{1,3,5\},\{1,3,6\}$&\\
\hline
8 & 3 & $\{\omega^3,\omega^5\},\{\omega^2,\omega^3,\omega^5\},\{\omega^2,\omega^4,\omega^6\}$&($\omega^3=\omega+1$)\\
\hline
9 & 12 & $\{1,2\},\{1,\tau^6\},\{\tau^2,\tau^3\}$&($\tau^2 = \tau+1$)\\
& & $\{\tau,\tau^5,\tau^7\},\{\tau,\tau^3,2\},\{1,\tau^3,2\},\{1,\tau,\tau^2\}$&\\
& & $\{1,\tau^2,2,\tau^6\}, \{1,\tau^3,2,\tau^7\},\{1,\tau,\tau^3,2\},\{\tau,2,\tau^6,\tau^7\},\{\tau^3,\tau^5,\tau^6,\tau^7\}$&\\
\hline
\end{tabular}
\end{table}

This method produces representative sets of reguli for each Andr\'e plane, but bogs down as $q$ increases, due to the need to create the actual subsets of reguli for each index. If we are only interested in the number of distinct Andr\'e planes with a given index, we can appeal to Burnside's lemma to count the number of orbits under the group $\Xi$, with the exception of the Andr\'e planes of index $\frac{q-1}{2}$, where complementation is again a confounding factor. Let us consider this case in more detail.

Let $B$ be the set of all subsets of $F^*$ of size $\frac{q-1}{2}$. Each element of $\Xi$ induces an action on $B$ through its action on the individual elements of $\{\J_f:f \in F^*\}$. Complementation is also a permutation on $B$, though it does not act element-wise; therefore there exists a permutation $\gamma$ acting on $B$ such that $b^\gamma = \overline{b}$ for all $b \in B$. Hence we can perform the same Burnside counting in this case, but we have to use the group $\Xi^*$ generated by the elements of $\Xi$ and $\gamma$. This turns out not to be as taxing as one might fear, based on the following result.

\begin{prop}
\label{burnside1}
Let $F = GF(q)$, $q > 2$ a prime power, and let $\Xi$ be the group of transformations $\xi^\pm_{\alpha,\tau}$ acting on $F^*$, with $\alpha \in F^*$ and $\tau \in Aut(F)$. Consider the induced group action of $\Xi$ on the set $B$ of subsets of $F^*$ of size $\frac{q-1}{2}$, and let $\Xi^*$ be the group generated by the permutations in $\Xi$ and $\gamma$, acting on $B$. Then:
\begin{enumerate}
\item For all $\xi \in \Xi$, $\xi$ and $\gamma$ commute;
\item $|\Xi^*|$ = $2|\Xi|$;
\item $\xi \gamma$ fixes some $b \in B$ if and only if all orbits of $\xi$, when acting on $F^*$, have even length; and
\item if all orbits of $\xi$, when acting on $F^*$, have even length, then $\xi \gamma$ fixes $2^{o(\xi)}$ elements of $B$, where $o(\xi)$ is the number of orbits of $\xi$ when acting on $F^*$.
\end{enumerate}
\end{prop}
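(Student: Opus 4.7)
The plan is to handle the four parts in order, exploiting two structural facts throughout: each $\xi \in \Xi$ is a bijection of $F^*$, and $\gamma$ is an involution on $B$.

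For part (1) I would just observe that bijectivity of $\xi$ on $F^*$ gives $\xi(F^* \setminus b) = F^* \setminus \xi(b)$ for every $b \in B$, so $\xi\gamma(b) = \gamma\xi(b)$ immediately.

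For part (2), commutativity together with $\gamma^2 = \mathrm{id}$ already reduces $\Xi^*$ to $\Xi \cup \Xi\gamma$, so everything comes down to showing that $\gamma$, viewed as a permutation of $B$, does not already lie in $\Xi$. Suppose some $\xi \in \Xi$ satisfies $\xi(b) = F^* \setminus b$ for every $b$ of size $(q-1)/2$. Applying $\xi$ a second time forces $\xi^2$ to fix every such $b$ setwise, which (using $(q-1)/2 \ge 2$, i.e.\ $q \ge 5$) implies $\xi^2 = \mathrm{id}$ on $F^*$: any $f$ can be separated from a hypothetical $\xi^2(f) \ne f$ by a half-set containing $f$ but not $\xi^2(f)$. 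The same separation idea rules out fixed points of $\xi$, so $\xi$ must be a product of $(q-1)/2$ disjoint transpositions, and $\xi(b) = F^* \setminus b$ then says exactly that $b$ picks one element from each transposed pair. Only $2^{(q-1)/2}$ such transversals exist, whereas $|B| = \binom{q-1}{(q-1)/2}$, and the binomial strictly exceeds the power of two for $q \ge 5$. This contradiction is the main obstacle of the proof; the other three parts are essentially bookkeeping.

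For parts (3) and (4), I would restate ``$\xi\gamma$ fixes $b$'' as $\xi(b) = F^* \setminus b$ and then examine one $\xi$-orbit $O = \{x, \xi(x), \ldots, \xi^{\ell-1}(x)\}$ at a time. The condition forces the indicator of $b \cap O$ to flip at every application of $\xi$, so walking once around the cycle is consistent if and only if $\ell$ is even; in that case there are exactly two compatible alternation patterns (the two ``phases''), and in the odd case there are none. Since the choices on distinct orbits are independent, either some orbit is odd and no fixed set exists, or every orbit is even and the local choices multiply across orbits to give $2^{o(\xi)}$ fixed sets, establishing (3) and (4) simultaneously.
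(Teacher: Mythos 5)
Your proof is correct, and for parts (1), (3) and (4) it follows essentially the same route as the paper: complementation commutes with any induced permutation, and $\xi\gamma$ fixes $b$ exactly when the indicator of $b$ alternates around each $\xi$-orbit on $F^*$, which forces even orbit lengths and yields two independent ``phase'' choices per orbit, hence $2^{o(\xi)}$ fixed sets. The one place you genuinely diverge is part (2): the paper dismisses statements (1) and (2) as clear, whereas you actually prove the nontrivial half of (2), namely that $\gamma$ is not already induced on $B$ by some $\xi \in \Xi$. Your separation argument (forcing $\xi^2 = \mathrm{id}$ and $\xi$ fixed-point-free, then comparing the $2^{(q-1)/2}$ transversals of the transposition pairs against $\binom{q-1}{(q-1)/2}$) is sound and fills a real gap in the paper's exposition. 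Note that your argument requires $q \ge 5$, and this is not just an artifact: for $q = 3$ the map $f \mapsto 2f$ already induces $\gamma$ on $B$, so statement (2) as written actually fails there; since the proposition is only invoked for indices $\frac{q-1}{2} \ge 2$, your implicit restriction matches the paper's actual use. (A complete account of (2) would also observe that $\Xi$ acts faithfully on $B$, which follows from the same separation idea.)
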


\begin{proof}
The first two statements are clear, since the action of $\xi$ on $B$ preserves complementation, namely if $a^\xi = b$ then $\overline{a}^\xi = \overline{b}$, and $\gamma$ has order 2 as a permutation. Suppose now that for some $b \in B$ we have $b^{\xi\gamma} = b$. For each element $f \in b$, we have $f^\xi \notin b$ in the action of $\xi$ on $F^*$. Since $|b| = |\overline{b}| = \frac{q-1}{2}$, this implies for all $f \in \overline{b}$, the preimage of $f$ under $\xi$ must be in $b$, hence for all $f \in \overline{b}$, we have $f^\xi \in b$. So if $O$ is any orbit of $\xi$ in its action on $F^*$, its elements must alternate being in $b$ and not in $b$, forcing each such orbit to have even length. Conversely, suppose every orbit of $\xi \in \Xi$ in its action on $F^*$ has even length. Pick one element from each orbit of $\xi$, and let $b$ be the union of the orbits of these elements under $\xi^2$. This set contains exactly half of the elements of $F^*$, and $\xi$ maps each element of $b$ to an element not in $b$, hence $\xi \gamma$ fixes $b$. This also shows the fourth claim, since each orbit of $\xi$ in its action on $F^*$ splits into two parts, each of which can be picked independently of all other orbits, to add into a set $b$ fixed by $\xi \gamma$.
\end{proof}

With this proposition in place, we have developed code in MAGMA to implement this Burnside counting; this code can be seen in Appendix B. For some small values of $q$, we obtain the counts of Andr\'e planes with a given index in Table~\ref{andrecount}.

\begin{table}
\label{andrecount}
\caption{Numbers of Andr\'e planes of order $q^2$, by index}
\begin{tabular}{ |c||cccccccccccc| }
\hline
\multirow{2}{6pt}{$q$} & \multicolumn{12}{c}{\bf Index}\vline\\
& 2 & 3 & 4 & 5 & 6 & 7 & 8 & 9 & 10 & 11 & 12 & 13\\
\hline\hline
5 & 2&&&&&&&&&&&\\
\hline
7 & 3 & 3&&&&&&&&&&\\
\hline
8 & 1 & 2&&&&&&&&&&\\
\hline
9 & 3 & 4 & 5&&&&&&&&&\\
\hline
11 & 5 & 8 & 16 & 13&&&&&&&&\\
\hline
13 & 6 & 12 & 29 & 38 & 35&&&&&&&\\
\hline
16 & 3 & 7 & 18 & 34 & 54 & 66&&&&&&\\
\hline
17 & 8 & 21 & 72 & 147 & 280 & 375 & 257&&&&&\\
\hline
19 & 9 & 27 & 104 & 252 & 561 & 912 & 1282 & 765&&&&\\
\hline
23 & 11 & 40 & 195 & 621 & 1782 & 3936 & 7440 & 11410 & 14938 & 8359&&\\
\hline
25 & 8 & 30 & 143 & 487 & 1517 & 3741 & 7934 & 13897 & 20876 & 26390 & 14632&\\
\hline
27 & 5 & 20 & 112 & 434 & 1532 & 4264 & 10145 & 20121 & 34291 & 49668 & 62220 & 33798\\
\hline
\end{tabular}
\end{table}
 
\section{Higher-Dimensional Andr\'e Planes}

The most significant difference between the higher-dimensional Andr\'e planes and the two-dimensional case is the presence of multiple replacements for a norm surface, which in turn yields additional regular spreads which can have a substantial intersection with an Andr\'e spread. This also makes the concept of index ambiguous, because just knowing that a norm surface is reversed is not enough information to determine an Andr\'e spread; we need to know which replacement is chosen. To this end, we define an indicator function $\I:F^* \rightarrow Aut(K/F)$, from which we obtain the Andr\'e spread $\AI = \{J(\infty),J(0)\} \cup \bigcup_{f \in F^*} \J^{\I(f)}_f$. Note that there are exactly $(n+1)^{q-1}$ such indicator functions, and they define all of the possible Andr\'e spreads.

Our first lemma is an analog of Lemma~\ref{alemma} for the higher-dimensional case.

\begin{lemma}
\label{3alemma}
Let $\AI$ be an Andr\'e spread of $PG(2n+1,q)$, $n > 1$, $q > 2$, defined via indicator function $\I$. Then the only regular spreads that meet $\AI$ in at least $q^2$ spaces are the regular spreads $\S^\sigma$ for $\sigma \in Aut(K/F)$ in the range of $\I$. Moreover, a spread $\S^\sigma$ meets $\AI$ in either exactly two spaces, or more than $q^2$.
\end{lemma}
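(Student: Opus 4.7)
The plan is to decompose $\AI$ according to which underlying regular spread $\S^\sigma$ each of its component spaces belongs to, and then to bound $|\T \cap \AI|$ for an arbitrary regular spread $\T$ using Bruck and Bose's bound that two distinct regular spreads meet in at most $q+1$ spaces.

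First, for the ``Moreover'' clause I would compute $|\AI \cap \S^\sigma|$ directly. Writing $\AI = \{J(\infty), J(0)\} \cup \bigcup_{f \in F^*} \J^{\I(f)}_f$, both $J(\infty)$ and $J(0) = J^\sigma(0)$ lie in every $\S^\sigma$, while for $k \in K^*$ comparing the parametrizations $\{(kx^\rho, x) : x \in K\} = \{(k' x^\sigma, x) : x \in K\}$ forces $\rho = \sigma$ (and $k = k'$). Hence each space of $\AI$ outside $\{J(\infty), J(0)\}$ lies in a unique $\S^\sigma$, namely the one with $\sigma = \I(\N(k))$, giving
\[
|\AI \cap \S^\sigma| \;=\; 2 + |\I^{-1}(\sigma)| \, \mu, \qquad \mu \;=\; \frac{q^{n+1}-1}{q-1}.
\]
This equals $2$ if $\sigma$ is not in the range of $\I$, and is at least $2 + \mu \ge 2 + q^2 + q + 1 > q^2$ otherwise (using $n \ge 2$), establishing the stated dichotomy.

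For the main assertion, let $\T$ be a regular spread with $\T \ne \S^\sigma$ for every $\sigma \in Aut(K/F)$. Set $\epsilon = |\T \cap \{J(\infty), J(0)\}|$ and $R = \mathrm{range}(\I)$. The unique-$\sigma$ property above yields
\[
|\T \cap \AI| \;=\; \epsilon + \sum_{\sigma \in R} \bigl| \T \cap \AI \cap (\S^\sigma \setminus \{J(\infty), J(0)\}) \bigr|,
\]
and by Bruck and Bose each summand is at most $|\T \cap \S^\sigma| - \epsilon \le (q+1) - \epsilon$. The crucial observation is that $|R| \le q-1$, since $\I$ is a function on the $(q-1)$-element set $F^*$. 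Therefore
\[
|\T \cap \AI| \;\le\; \epsilon + (q-1)(q+1-\epsilon) \;=\; q^2 - 1 - (q-2)\epsilon \;\le\; q^2 - 1 \;<\; q^2,
\]
using $q > 2$. Combining the two parts gives the lemma.

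The main obstacle I anticipate is recognizing that the bound $|R| \le q - 1$ coming from the domain $F^*$ of $\I$, rather than the codomain bound $|R| \le |Aut(K/F)| = n+1$, is what keeps the estimate strictly below $q^2$ for all admissible $(n,q)$. The naive codomain bound only yields $(n+1)(q+1)$, which can exceed $q^2$ when $n$ is comparable to or larger than $q$; once the domain bound is invoked, the Bruck--Bose estimate collapses below $q^2$ and forces $\T$ to coincide with some $\S^\sigma$ for $\sigma \in R$ whenever $|\T \cap \AI| \ge q^2$.
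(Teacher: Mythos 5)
Your proof is correct and follows essentially the same route as the paper: both arguments cover $\AI \setminus \{J(\infty),J(0)\}$ by pieces lying in known regular spreads and apply the Bruck--Bose bound of $q+1$ on the intersection of two distinct regular spreads, together with the direct count $2+|\I^{-1}(\sigma)|\frac{q^{n+1}-1}{q-1}$ for the spreads $\S^\sigma$. The only cosmetic difference is that you group the norm surfaces by $\sigma$ (so you must invoke $|\mathrm{range}(\I)|\le q-1$), whereas the paper groups them by $f\in F^*$, which gives the bound $(q-1)(q+1)=q^2-1$ immediately.
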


\begin{proof}
Let $\AI$ be as in the lemma statement, and let $\T$ be a regular spread distinct from the $\S^\sigma$. Then for each $f \in F^*$, $\T$ meets the set $\J^{\I(f)}_f \cup \{(J(0),J(\infty)\} \subset \S^{\I(f)}$ in at most $q+1$ spaces, hence the total number of spaces in $\T \cap \AI$ is at most $(q+1)(q-1) = q^2-1$. For any field automorphism $\tau \in Aut(K/F)$, if $\tau = \I(f)$ for some $f \in F^*$, then $\S^\tau$ contains at least $2+\frac{q^{n+1}-1}{q-1} > q^2$ spaces of $\AI$; otherwise $\S^\tau$ meets $\AI$ in just $J(\infty)$ and $J(0)$.
\end{proof}

We can now prove the key result we need to sort isomorphism classes of higher-dimensional Andr\'e planes. It is not as strong as the corresponding result for $n=1$, requiring more work in enumeration, but it is an improvement over having to work directly with spreads of $PG(2n+1,q)$.

\begin{prop}
\label{lin3}
Let $\AI$ and $\AH$ be non-regular Andr\'e spreads of $PG(2n+1,q)$ obtained from the regular spread $\S$ with indicator functions $\I$ and $\H$. Then any isomorphism $\psi$ between $\AI$ and $\AH$ must leave $\{J(\infty),J(0)\}$ invariant. Moreover, $\psi$ maps the collection of sets of subspaces ${\mathcal J} = \{\J^\sigma_f:\sigma \in Aut(K/F), f \in F^*\}$ onto itself.
\end{prop}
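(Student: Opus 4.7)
The plan is to combine Lemma~\ref{3alemma}, which pins down the regular spreads meeting $\AI$ richly, with the carrier uniqueness of Proposition~\ref{linear}, which controls how norm surfaces can move under collineations. Because $\AI$ is non-regular, its indicator function $\I$ takes at least two distinct values on $F^*$, so Lemma~\ref{3alemma} exhibits at least two regular spreads $\S^{\sigma_1}, \S^{\sigma_2}$ each meeting $\AI$ in more than $q^2$ spaces, with the analogous statement on the $\AH$ side. Since $\psi$ preserves intersection sizes with regular spreads, it bijects the range-of-$\I$ spreads with the range-of-$\H$ spreads. A short calculation shows that an equality $J^{\sigma_1}(k) = J^{\sigma_2}(k')$ with $\sigma_1 \neq \sigma_2$ forces $k x^{\sigma_1} = k' x^{\sigma_2}$ for all $x \in K$, which is impossible on $K^*$ unless $k = k' = 0$; thus $\S^{\sigma_1} \cap \S^{\sigma_2} = \{J(\infty), J(0)\}$ whenever $\sigma_1 \neq \sigma_2$. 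Taking $\psi$-images of this common intersection yields the same two-element set, so $\psi$ must leave $\{J(\infty), J(0)\}$ invariant.

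For the second assertion, I would first handle the norm surfaces of $\mathcal{J}$ that actually appear in $\AI$. Fix $f \in F^*$, let $\sigma = \I(f)$, and let $\tau$ be such that $\psi(\S^\sigma) = \S^\tau$. Then $\psi(\J^\sigma_f)$ is a norm surface lying in $\AH \cap \S^\tau$; since the first assertion shows the image is disjoint from $\{J(\infty), J(0)\}$, it sits inside $\bigcup_{f' : \H(f') = \tau} \J^\tau_{f'}$. By Proposition~\ref{linear} (using $n > 1$), the norm surface $\J^\sigma_f$ has $\{J(\infty), J(0)\}$ as its unique pair of carriers. Carriers are a collineation invariant, so $\psi(\J^\sigma_f)$ has the same carrier pair setwise and hence lies in the unique linear set of $\S^\tau$ with those carriers, namely $\{\J^\tau_{f'} : f' \in F^*\}$. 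Therefore $\psi(\J^\sigma_f) = \J^\tau_{f'}$ for some $f' \in F^*$, which is an element of $\mathcal{J}$.

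The main obstacle is extending this to the norm surfaces $\J^\sigma_f \in \mathcal{J}$ with $\sigma \neq \I(f)$: Lemma~\ref{3alemma} gives no direct control on $\psi(\S^\sigma)$ when $\sigma$ lies outside the range of $\I$, since such $\S^\sigma$ meet $\AI$ in only the two fixed spaces. I would try to close the gap by showing that $\psi$ in fact permutes the full family $\{\S^\sigma : \sigma \in Aut(K/F)\}$, perhaps by characterizing each $\S^\sigma$ among the regular spreads through $J(\infty)$ and $J(0)$ using the norm surfaces already handled, together with a counting argument over carriers and linear sets, or by exhibiting enough of $\psi$ as a semilinear transformation $(x,y) \mapsto (\alpha(x), \beta(y))$ on $K \oplus K$ to read off its effect on every $J^\sigma(k)$. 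Once $\psi(\S^\sigma)$ is identified as some $\S^{\sigma'}$ for every $\sigma$, the carrier argument above applies verbatim to yield $\psi(\J^\sigma_f) \in \mathcal{J}$, and surjectivity of $\psi$ on $\mathcal{J}$ follows by cardinality.
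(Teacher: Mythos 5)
Your proof of the first assertion is sound and matches the paper's: non-regularity gives at least two spreads $\S^\sigma$ meeting $\AI$ in more than $q^2$ spaces, $\psi$ permutes these distinguished spreads, and their pairwise intersection $\{J(\infty),J(0)\}$ is therefore invariant. Your treatment of the norm surfaces actually contained in $\AI$ also works, and takes a slightly different route from the paper: you use the uniqueness of carriers from Proposition~\ref{linear} (carriers being a collineation invariant) to force $\psi(\J^{\I(f)}_f)$ into the linear set $\{\J^\tau_{f'}:f'\in F^*\}$, whereas the paper gets the same conclusion from the intersection bound of Lemma~\ref{intersection} plus a pigeonhole count against the at most $\frac{q-1}{2}$ norm surfaces of $\AH\cap\S^\tau$. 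Either mechanism is acceptable for that step.

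However, the part you flag as ``the main obstacle'' is a genuine gap, and it is precisely the content of the second assertion: you never show that $\psi$ maps $\J^\sigma_f$ into ${\mathcal J}$ when $\sigma$ is outside the range of $\I$, and your proposed remedies (characterizing every $\S^\sigma$ among regular spreads through $J(\infty)$ and $J(0)$, or reconstructing $\psi$ as a semilinear map) are only sketched, not executed. The machinery to close this is already in the paper and you did not invoke it: once you know $\psi$ carries some $\S^\sigma$ in the range of $\I$ to some $\S^\tau$ and fixes $\{J(\infty),J(0)\}$ setwise, the composition $\psi\lambda_\tau$ (after normalizing $\sigma$ to the identity) leaves $\S$ invariant and fixes the carrier pair, hence leaves the linear set $\L$ invariant and lies in $G_\L$ by Theorem~\ref{lingroups}(1); consequently $\psi\in G_{\mathcal J}=\{\chi\lambda_\sigma:\chi\in G_\L,\ \sigma\in Aut(K/F)\}$, and Theorem~\ref{lingroups}(3)--(4) shows explicitly that every such collineation permutes the \emph{entire} family ${\mathcal J}$, including the replacements $\J^\sigma_f$ with $\sigma\neq\I(f)$. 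Without this (or an equivalent explicit computation of the action on every $J^\sigma(k)$), the second assertion of the proposition remains unproved.
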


\begin{proof}
Suppose $\AI$ and $\AH$ are isomorphic, with isomorphism $\psi$ a collineation in $PG(2n+1,q)$ mapping $\AI$ onto $\AH$. Since $\psi$ is an isomorphism, it must map the set of regular spreads $\S_\I$ meeting $\AI$ in more than $q^2$ points to the set of regular spreads $\S_\H$ meeting $\AH$ in more than $q^2$ points, hence $\psi$ must map the intersection of the $\S_\I$ onto the intersection of the $\S_\H$. By Lemma~\ref{3alemma}, $\S_\I$ and $\S_\H$ are both subsets of $\{\S^\sigma:\sigma \in Aut(K/F)\}$ and since $\AI$ and $\AH$ are not regular, $\S_\I$ and $\S_\H$ each contain at least two spreads, implying the intersection is exactly $\{J(\infty),J(0)\}$, which must be left invariant by $\psi$.

Suppose there exists $\sigma \in Aut(K/F)$ such that $(\S^\sigma)^\psi = \S^\sigma$; without loss of generality, we may assume $\sigma$ is the identity. Then $\psi$ is a collineation of $PG(2n+1,q)$ that leaves $\S$ invariant, leaves the set $\{J(\infty),J(0)\}$ invariant, and thus the linear set $\L = \{\J_f:f \in F^*\}$ invariant as well. By Theorem~\ref{lingroups}, this implies $\psi \in G_\L$ and thus $\psi \in G_{\mathcal J}$, showing $\psi$ leaves the set ${\mathcal J}$ invariant.

Suppose now there is no $\sigma \in Aut(K/F)$ such that $\psi$ leaves $\S^\sigma$ invariant. Since the set $\S_\I$ of regular spreads meeting $\AI$ in more than $q^2$ points has at least two members, there must be some spread in $\S_\I$ that meets $\AI$ in at least one, but at most $\frac{q-1}{2}$ norm surfaces, plus $\{J(\infty),J(0)\}$. Without loss of generaity, we may assume this spread is $\S$. Since $\S \in \S_\I$, we know $(\S)^\psi \in \S_\H$ and thus $(\S)^\psi = \S^\tau$  for some $\tau \in Aut(K/F)$.

For each norm surface $\J = \J_f \subset \AI \cap \S$, $\J^\psi$ is contained in the union of at most $\frac{q-1}{2}$ norm surfaces $\J^{\tau}_{f'} \subset \AH \cap \S^\tau$; $\J^\psi$ cannot intersect $\{J(\infty),J(0)\}$ as these two spaces are left invariant by $\psi$. But by Lemma~\ref{intersection}, $\J^\psi$ must be identical with one of the $\J^{\tau}_{f'}$, for otherwise $\J^\psi$ could only contain at most $\frac{q-1}{2} \times 2\frac{q^n-1}{q-1} < \frac{q^{n+1}-1}{q-1}$ spaces. By Proposition~\ref{linear}, a norm surface belongs to only one linear set of norm surfaces in a regular spread, so this implies that $\psi$ must map the set of norm surfaces $\{\J_f:f \in F^*\}$ in $\S$ onto the set of norm surfaces $\{\J^\tau_{f}:f \in F^*\}$ in $\S^\tau$.

Recall the collineation $\lambda_\tau$ of $PG(2n+1,q)$ defined via $(x,y)\lambda_\tau = (x,y^\tau)$. Clearly $\lambda_\tau$ leaves each of $J(\infty)$ and $J(0)$ invariant, and maps $J^\tau(k)$ onto $J(k)$ for all $k \in K^*$. Thus $\psi \lambda_\tau$ leaves $\S$ invariant, and leaves the set $\{J(\infty),J(0)\}$ invariant. Using the same trick as before with Theorem~\ref{lingroups}, this forces $\psi \lambda_\tau = \chi_{a,b,c,d,\rho}$ with either $b=c=0$ or with $a=d=0$, and a similar calculation to the above shows that $\psi$ maps the collection of sets of subspaces ${\mathcal J}=\{\J^\sigma_f:\sigma \in Aut(K/F), f \in F^*\}$ onto itself.
\end{proof}

In light of this Proposition, we can use the group $\Upsilon$ of Theorem~\ref{lingroups} to sort isomorphism between Andr\'e spreads. Letting $\I$ and $\H$ be indicator functions of two Andr\'e spreads $\AI$ and $\AH$, this proposition shows that there exists an isomorphism between $\AI$ and $\AH$ if and only if there exists $\upsilon \in \Upsilon$ such that $(\J^{\I(f)}_f)^\upsilon = \J^{\H(f')}_{f'}$ for some $f' \in F^*$. From a computational perspective, we can represent the indicator functions as sets of ordered pairs $\{(f,\I(f)):f \in F^*\}$ and extending the action of $\upsilon$ naturally to these ordered pairs, we see that $\AI$ and $\AH$ are isomorphic if and only if there exists $\upsilon \in \Upsilon$ such that $\{(f,\I(f)):f \in F^*\}^\upsilon = \{(f,\H(f)):f \in F^*\}$.

\begin{table}
\label{t3d}
\caption{Enumeration of higher-dimensional Andr\'e planes for some small $q$}
\begin{tabular}{ |c|c|c|c|lr| }
\hline
$n$ & $q$ & Order & {\bf Number} & \multicolumn{2}{c}{\bf Representatives}\\
\hline\hline
2 & 3 & 27 & 1 & $\{(1,1),(2,3)\}$&\\
\hline
2 & 4 & 64 & 2 & $\{(1,1),(\omega,1),(\omega^2,4)\}$,$\{(1,1),(\omega,4),(\omega^2,16)\}$ & $(\omega^2 = w+1)$\\
\hline
2 & 5 & 125 & 6 & $\{(1,1),(2,1),(3,1),(4,5)\}$, $\{(1,1),(2,1),(3,1),(4,25)\}$&\\
&&&& $\{(1,1),(2,5),(3,5),(4,1)\}$, $\{(1,1),(2,5),(3,1),(4,5)\}$&\\
&&&& $\{(1,1),(2,1),(3,5),(4,25)\}$, $\{(1,1),(2,1),(3,25),(4,5)\}$&\\
\hline
3 & 3 & 81 & 2 & $\{(1,1),(2,3)\}$,$\{(1,1),(2,9)\}$&\\
\hline
3 & 4 & 256 & 3 & $\{(1,1),(\omega,1),(\omega^2,4)\}$,$\{(1,1),(\omega,1),(\omega^2,16)\}$,&$(\omega^2 = w+1)$\\
&&&& $\{(1,1),(\omega,4),(\omega^2,16)\}$&\\
\hline
\end{tabular}
\end{table}

As before, this process is tractable for small cases by hand; let us consider the Andr\'e planes of order 27, whence $n=2$ and $q=3$. In this case, there are only 9 Andr\'e spreads. Starting with $\{(1,1),(2,1)\}$, which is just the regular spread $\S$, we see this set is preserved under both multiplication by $-1$ and inversion, so the only other spreads in its orbit are $\{(1,3),(2,3)\}$ and $\{(1,9),(2,9)\}$, namely $\S^3$ and $\S^9$. A non-Desarguesian Andr\'e plane is represented by the set $\{(1,1),(2,3)\}$. Applying $\upsilon^+_{1,1,3}$ twice shows that $\{(1,3),(2,9)\}$ and $\{(1,9),(2,1)\}$ represent isomorphic spreads, and applying $\upsilon^+_{2,1,1}$ maps $\{(1,1),(2,3)\}$ to $\{(2,1),(1,3)\}$. Thus all six of the non-regular Andr\'e spreads are isomorphic, and there is just one non-Desarguesian Andr\'e plane of order 27.

Representing our sets $\{(f,\I(f)):f \in F^*\}$ as a sequence of field automorphisms by defining a consistent ordering of the elements of $F^*$, we have implemented this sorting algorithm in MAGMA. Note that $\Upsilon$ can be generated by four elements: $\upsilon^+_{\omega,1,1}$, $\upsilon^-_{1,1,1}$, $\upsilon^+_{1,p,1}$, and $\upsilon^+_{1,1,q}$, where $p$ is the characteristic of $F$, and we use MAGMA's group generation algorithms to create the entire group. Table 3 provides indicator functions for all non-Desarguesian  Andr\'e planes for some small orders with given $n$ and $q$.

For larger values of $q$, we can again count Andr\'e plane non-constructively using Burnside's lemma and the group $\Upsilon$ of Theorem~\ref{lingroups}. The situation in higher dimensions is messier than that with $n=1$, as we need to break into several cases to count the number of Andr\'e spreads fixed by each element of $\Upsilon$. The cause of the difficulty is the inversion map $(x,y)^\upsilon = (y,x)$; when $n=1$ this map preserves both the spreads $\S$ and $\S^q$, but for larger $n$ this map may interchange some of the replacements for norm surfaces, making the analysis more intricate.

Let us start with the easy case, namely $\upsilon = \upsilon^+_{\alpha,\tau,\sigma}$, and define $\hat{\upsilon}:F^* \rightarrow F^*$ be defined via $f^{\hat{\upsilon}} = \alpha f^\tau$. If $\AI$ is an Andr\'e spread left invariant by $\upsilon$, and $\J^\rho_f \in \AI$, then from the definition of $\upsilon$, we know $$(\J^\rho_f)^\upsilon = \J^{\rho\sigma^{-1}}_{\alpha f^{\tau}} = \J^{\rho\sigma^{-1}}_{f^{\hat{\upsilon}}}$$

For any $a \in F^*$, let $\ell$ be the length of its orbit under $\hat{\upsilon}$. This implies that $(\J^\rho_a)^{\upsilon^\ell} = \J^\mu_a$ for some automorphism $\mu \in Aut(K/F)$. Since $\upsilon$ leaves $\AI$ invariant and $\AI$ contains only one norm surface associated with norm $a$, we must have $\mu = \rho$. We see that $\mu = \rho \sigma^{-\ell}$, so $\mu = \rho$ if and only if $\ell$ is divisible by the order of $\sigma$ as a field automorphism.

Therefore $\upsilon$ fixes an Andr\'e spread $\AI$ if and only if all of the orbit lengths of $\hat{\upsilon}$ are divisible by the order of $\sigma$. If this does occur, then much as in the $n=1$ case we can count the number of Andr\'e spreads fixed by $\upsilon$. For one member $a$ of each orbit under $\hat{\upsilon}$, we may select an arbitrary $\rho_a \in Aut(K/F)$ in $n+1$ ways; then the union of the orbits of $\J^{\rho_a}_a$ under $\upsilon$, for each $a$, form an Andr\'e spread fixed by $\upsilon$. Hence $\upsilon$ fixes $(n+1)^{o(\hat{\upsilon})}$ Andr\'e spreads $\AI$, where $o(\hat{\upsilon})$ is the number of orbits of $\hat{\upsilon}$.

If $\upsilon = \upsilon^-_{\alpha,\tau,\sigma}$, the situation is slightly more complex. Defining $\hat{\upsilon}$ as above, we have $$(\J^\rho_f)^\upsilon = \J^{\rho^{-1}\sigma^{-1}}_{\alpha f^{-\tau}} = \J^{\rho^{-1}\sigma^{-1}}_{f^{\hat{\upsilon}}}$$ and $$(\J^\rho_f)^{\upsilon^2} = \J^{\rho}_{f^{\hat{\upsilon}^2}}$$

Let $a \in F^*$. If the orbit length of $a$ under $\hat{\upsilon}$ is even, then just like before we can pick $\rho \in Aut(K/F)$ arbitrarily, and the orbit of $\J^\rho_a$ under $\upsilon$ will form part of an Andr\'e spread. But if the orbit length of $a$ under $\hat{\upsilon}$ is odd, we must have $\rho = \rho^{-1} \sigma^{-1}$, or $\rho^2 = \sigma^{-1}$, which is eminently feasible since $Aut(K/F)$ is cyclic of order $n+1$. If $n$ is even, then for every $\sigma \in Aut(K/F)$ there is a unique $\rho$ such that $\rho^2 = \sigma^{-1}$, so any Andr\'e spread fixed by this $\upsilon$ must contain $\J^\rho_f$ for all $f$ in the orbit of $a$ under $\hat{\upsilon}$. If $n$ is odd, then for half of the $\sigma \in Aut(K/F)$, there is no $\rho$ such that $\rho^2 = \sigma^{-1}$, and those $\upsilon$ fix no Andr\'e spreads. For the remaining half there are two options for $\rho$, giving two orbits of $\J^\rho_a$ which can be in an Andr\'e spread fixed by $\upsilon$.

Based on this analysis, we have developed code in MAGMA to implement this counting procedure; this code appears as Appendix D. Table~\ref{c3d} shows the number of non-isomorphic Andr\'e planes, not including the Desarguesian plane, for some small values of $n$ and $q$. Note that one cannot say for a given $n$ and $q$ that every Andr\'e plane derived from an Andr\'e spread of $PG(2n+1,q)$ has dimension $n+1$ over a full kernel $GF(q)$; the regular spread belies this notion; we do not attempt to sort isomorphism between Andr\'e planes for different $n$ here.

\begin{table}
\label{c3d}
\caption{Number of non-Desarguesian Andr\'e planes obtained from Andr\'e spreads of $PG(2n+1,q)$}
\begin{tabular}{ |c||ccccccccc| }
\hline
\multirow{2}{6pt}{$q$} & \multicolumn{9}{c}{$n$}\vline\\
& 2 & 3 & 4 & 5 & 6 & 7 & 8 & 9 & 10\\
\hline\hline
3 & 1 & 2 & 2 & 3 & 3 & 4 & 4 & 5 & 5\\
\hline
4 & 2 & 3 & 4 & 6 & 7 & 9 & 11 & 13 & 15\\
\hline
5 & 6 & 15 & 23 & 40 & 57 & 86 & 114 & 157 & 200\\
\hline
7 & 31 & 112 & 300 & 729 & 1503 & 2902 & 5134 & 8651 & 13795\\
\hline
8 & 25 & 114 & 402 & 1160 & 2877 & 6350 & 12804 & 24012 & 42445\\
\hline
\end{tabular}
\end{table}

\section{Conclusion}
The early history of finite projective planes provided a handful of infinite families, as well as some sporadic examples with particularly interesting automorphisms. Primarily in the 1990s and into the 2000s, there was an explosion in both construction techniques for new planes, as well as in computational results classifying the finite translation planes of orders 16, 25, 27 and 49. Ironically, this mass of data seems to have provided a disincentive to continued work. On the one hand, the plethora of new construction techniques gives the impression that most translation planes are ``known", while the computational results suggest that a complete classification of finite projective planes, let alone translation planes, is infeasible.

Enumeration of existing families of translation planes is not groundbreaking work, but it does represent a way forward in the theory of finite projective planes. The author~\cite{gen25} has taken a step in this direction, by correlating planes created by known construction techniques against the list of translation planes of order 25. This analysis shows that there is a translation plane, designated B8 by Czerwinski and Oakden~\cite{czoak}, which cannot be obtained from existing techniques. It may be sporadic, or it may be from an infinite family yet to be found; regardless, it provides an interesting question for further research. We view this work as a contribution toward ``clearing out the underbrush", hopefully helping to reveal trailheads for the next paths forward in the theory of finite projective planes.

\bibliographystyle{plain}

\begin{thebibliography}{1}

\bibitem{albert}
A.A. Albert.
\newblock The finite planes of Ostrom. 
\newblock Lecture Notes, Univ. of Chicago. 1961.

\bibitem{andre}
J. Andr\'e.
\newblock \"Uber nicht-Desarguessche Ebenen mit transitiver Translations-Gruppe. 
\newblock {\em Math. Zeit.}, 60:156--186, 1954.

\bibitem{magma}
Wieb Bosma, John Cannon, and Catherine Playoust.
\newblock The Magma algebra system. I. The user language. 
\newblock {\em J. Symbolic Comput.}, 24(3--4):235--265, 1997.

\bibitem{bruck}
R.H. Bruck.
\newblock Construction problems of finite projective planes, in {\em Combinatorial Mathematics and Its Applications} (R.C. Bose and T.A. Dowling, Ed.), Ch. 27, pp. 426--514, Univ. of North Carolina Press, Chapel Hill, NC, 1969.

\bibitem{bruck:cg2}
R.H. Bruck.
\newblock Circle geometry in higher dimensions, {II}.
\newblock {\em Geom. Dedicata}, 2:133--188, 1973.

\bibitem{bruckbose}
 R.H. Bruck and R.C. Bose.
\newblock The construction of translation planes from projective spaces, 
\newblock {\em J. Algebra}, 1:85--102, 1964.

\bibitem{czoak}
T. Czerwinski and D. Oakden.
\newblock The translation planes of order twenty-five.
\newblock {\em J. Combin. Theory Ser. A}, 59:193--217, 1992.

\bibitem{thesis}
J.M. Dover.
\newblock {\em Theory and Applications of Spreads of Geometric Spaces},
\newblock PhD thesis, Univ. of Delaware, 1996.

\bibitem{gen25}
Jeremy M. Dover
\newblock A genealogy of the translation planes of order 25.
\newblock Retrieved 13 May 2021 from the arXiv database, arXiv:1902.07838, 2019.

\bibitem{jjb}
N.L. Johnson, V. Jha, and M. Biliotti.
\newblock {\em Handbook of Finite Translation Planes}, Pure and Applied Mathematics, Chapman \& Hall/CRC, Boca Raton, FL, 2007.

\end{thebibliography}

\section*{Appendix A: Two-Dimensional Andr\'e Enumeration Code}
\begin{Verbatim}[fontsize=\tiny]
/*Set up basic structures to implement group*/
q:=7; F<w>:=GF(q); F0:=SetToSequence(Set(F) diff {0}); Sq:=Sym(q-1);

/*Create multiplication and inverse permutations.  Group creation will ensure we get all.*/
phi:=[Sq![Position(F0,a*F0[i]):i in {1..#F0}]:a in F0] cat [Sq![Position(F0,1/F0[i]):i in {1..#F0}]];
	 
/*Make sure to add field automorphism if q is not prime.*/
if not(IsPrime(q)) then Append(~phi,Sq![Position(F0,F0[i]^(Divisors(q)[2])):i in {1..#F0}]); end if;
G:=sub<Sq|phi>;

/*Iterate through subregular indices, calculating orbits*/
for i in {2..Floor((q-1)/2)} do
  S:=Subsets({1..#F0},i);
  GS:=GSet(G,S);
  O:=Orbits(G,GS);
  reps:=[];
  /*This iteration coalesces complements. Really only needed when 2i+1 = q, but causes no harm otherwise*/
  for j in {1..#O} do
    if forall{x:x in reps|{1..#F0} diff x notin O[j]} then
	  Append(~reps,Rep(O[j]));
	end if;
  end for;
  print i,[{F0[j]:j in x}:x in reps];
end for;
\end{Verbatim}

\section*{Appendix B: Two-Dimensional Andr\'e Counting Code}
\begin{Verbatim}[fontsize=\tiny]
q:=11; F<w>:=GF(q); F0:=SetToSequence(Set(F) diff {0}); Sq:=Sym(q-1);

/*Create multiplication and inverse permutations. Group creation will ensure we get all.*/
phi:=[Sq![Position(F0,a*F0[i]):i in {1..#F0}]:a in F0] cat [Sq![Position(F0,1/F0[i]):i in {1..#F0}]];

/*Make sure to add field automorphism if q is not prime.*/
if not(IsPrime(q)) then Append(~phi,Sq![Position(F0,F0[i]^(Divisors(q)[2])):i in {1..#F0}]); end if;
 G:=sub<Sq|phi>;

/*Determine cycle structure of each group element, and store it as a sequence of number of cycles of each length.
   There will be repeats, so we will store by frequency.*/
gcyc:=[]; gfrq:=[];
for x in G do
  x1:=CycleStructure(x);
  gtmp:=[0:j in {1..q-1}];
  for k in {1..#x1} do
    gtmp[x1[k][1]]:=x1[k][2];
  end for;
  if Position(gcyc,gtmp) eq 0 then
    Append(~gcyc,gtmp);
	Append(~gfrq,1);
  else
    gfrq[Position(gcyc,gtmp)]+:=1;
  end if;
end for;

for i in {2..Floor((q-1)/2)} do
  Gfix:=0;
  p1:=Partitions(i);
  for p in p1 do
	prt:=[#{k:k in {1..#p}|p[k] eq j}:j in {1..i}];
    /* Convert partitions of i into sequences counting the number of occurrences of each integer */
	for j in {1..#gcyc} do
	  coll:=1;
	  for k in {1..#prt} do
	    if prt[k] ne 0 then coll*:=Binomial(gcyc[j][k],prt[k]); end if;
	  end for;
	  Gfix+:=gfrq[j]*coll;
	end for;
  end for;
  /*Extra group elements in maximal case*/
  if (2*i+1 eq q) then
    for j in {1..#gcyc} do
	  if forall{m:m in {1..#gcyc[j]}|IsEven(m) or gcyc[j][m] eq 0} then
 	    Gfix+:=2^(&+(gcyc[j]))*gfrq[j];
      end if;
    end for;
    print i,Gfix/(2*Order(G));
  else
    print i,Gfix/Order(G);
  end if;
end for;
\end{Verbatim}

\section*{Appendix C: Higher-Dimensional Andr\'e Enumeration Code}
\begin{Verbatim}[fontsize=\tiny]
n:=3; q:=5; F:=GF(q);

/*Find a primitive element omega*/
omega:=Rep({f:f in F|f ne 0 and Order(f) eq q-1}); Fstar:=[omega^i:i in {0..q-2}];

/*Generate all sequences of automorphisms of length q-1*/
So:=[[q^i]:i in {0..n}];
for j in {1..q-2} do
  S:=[s cat [q^i]:s in So,i in {0..n}];
  So:=S;
end for;
G:=Sym(#S);
p1:=[]; p2:=[]; p3:=[]; p4:=[];

/*upsilon+{omega,1,1)...the ordering on Fstar implements this by rotation of the sequence.*/
for i in {1..#S} do Append(~p1,Position(S,Rotate(S[i],1))); end for;
g1:=G!p1;

/*upsilon-{1,1,1}*/
iseq:=[Position(Fstar,Fstar[i]^(-1)):i in {1..#Fstar}];
for i in {1..#S} do Append(~p2,Position(S,[(q^(n+1) div S[i][iseq[j]]) mod (q^(n+1)-1):j in {1..q-1}])); end for;
g2:=G!p2;

/*upsilon+{1,p,1}*/
if not(IsPrime(q)) then
  p:=Characteristic(F);
  iseq:=[Position(Fstar,Fstar[i]^p):i in {1..#Fstar}];
  for i in {1..#S} do Append(~p3,Position(S,[S[i][iseq[j]]:j in {1..q-1}])); end for;
  g3:=G!p3;
else g3:=G!1; end if;

/*upsilon+{1,1,q}*/
for i in {1..#S} do Append(~p4,Position(S,[S[i][j]*q mod (q^(n+1)-1):j in {1..q-1}])); end for;
g4:=G!p4;

H:=sub<G|g1,g2,g3,g4>;
O:=Orbits(H); #O;
\end{Verbatim}

\section*{Appendix D: Higher-Dimensional Andr\'e Counting Code}
\begin{Verbatim}[fontsize=\tiny]
F<w>:=GF(q); Fstar:=SetToSequence(Set(F) diff {0}); Sq:=Sym(q-1);
Gfix:=0;
for alpha in Fstar do
  for tau in Prune(Divisors(q)) do /*Prune removes q*/
    /* Do the plus version first*/
    phi:=Sq![Position(Fstar,alpha*Fstar[i]^tau):i in {1..#Fstar}];
    x:=CycleStructure(phi);
    for o in Divisors(n+1) do /*Use cyclic structure of Aut(K/F)*/
      Sfix:=1;
      for i in {1..#x} do
        if (x[i][1] mod o) eq 0 then
          Sfix*:=(n+1)^x[i][2];
        else
          Sfix:=0;
          break;
        end if;
      end for;
      Gfix+:=Sfix*EulerPhi(o); /*phi(o) elements of order o*/
    end for;

    /*Now the minuses*/
    phi:=Sq![Position(Fstar,alpha/(Fstar[i]^tau)):i in {1..#Fstar}];
    x:=CycleStructure(phi);
    for o in Divisors(n+1) do
      Sfix:=1;
      for i in {1..#x} do
        if IsEven(x[i][1]) then
          Sfix*:=(n+1)^x[i][2];
        else
          if IsEven(n) then
            Sfix*:=1;
          else
            if IsEven((n+1) div o) then
              Sfix*:=2^x[i][2];
            else
              Sfix:=0;
              break;
            end if;
          end if;
        end if;
      end for;
      Gfix+:=Sfix*EulerPhi(o); /*phi(o) elements of order o*/
    end for;
  end for;
end for;
print Gfix/(2*(q-1)*#(Prune(Divisors(q)))*(n+1))-1;
\end{Verbatim}
\end{document}